 \theoremstyle{definition}
 \theoremstyle{remark}
 \numberwithin{equation}{section}
\newtheorem{theorem}{Theorem}
\newtheorem{definition}[theorem]{Definition}
\newtheorem{lemma}[theorem]{Lemma}
\newtheorem{remark}[theorem]{Remark}
\newcommand{\eqnum}{\refstepcounter{equation}\textup{\tagform@{\theequation}}}
\newcommand{\dis}{\displaystyle}
\newcommand{\divv}{\text{\rm div}}
\newcommand{\No}{{\bf n}}
\newcommand{\Ta}{{\bf{t}}}
\newcommand{\cL}{\mathcal L}
\newcommand{\R}{\mathbb R}
\newcommand{\intoxb}{\int_{\partial\Omega}}
\newcommand{\intox}{\int_{\Omega}}
\newcommand{\intoxt}{\int_0^t\int_{\Omega}}
\newcommand{\intoxtb}{\int_0^t\int_{\partial\Omega}}
\newcommand{\trho}{\tilde\rho}
\newcommand{\tu}{\tilde{\bf u}}
\newcommand{\vu}{{\bf u}}
\newcommand{\vw}{{\bf w}}
\newcommand{\seta}{\tilde\sum_1^{M,T}}
\newcommand{\setb}{\tilde\sum_2^{N,T}}
\renewcommand{\theequation}{\thesection.\arabic{equation}}
\numberwithin{equation}{section}
\numberwithin{theorem}{section}
\numberwithin{figure}{section}
\begin{document}

%-------------------------------------------------------------------------
% editorial commands: to be inserted by the editorial office
%
%\firstpage{1} \volume{228} \Copyrightyear{2004} \DOI{003-0001}
%
%
%\seriesextra{Just an add-on}
%\seriesextraline{This is the Concrete Title of this Book\br H.E. R and S.T.C. W, Eds.}
%
% for journals:
%
%\firstpage{1}
%\issuenumber{1}
%\Volumeandyear{1 (2004)}
%\Copyrightyear{2004}
%\DOI{003-xxxx-y}
%\Signet
%\commby{inhouse}
%\submitted{March 14, 2003}
%\received{March 16, 2000}
%\revised{June 1, 2000}
%\accepted{July 22, 2000}
%
%
%
%---------------------------------------------------------------------------
%Insert here the title, affiliations and abstract:
%

%\title[Navier-Sokes Equations with Large External Force]
 %{Global Existence of Weak Solution to \\Navier-Stokes Equations with \\Large External Potential Force and General Pressure}

%----------Author 1
%\author[Anthony Suen]{Anthony Suen}

%\address{%
%Department of Mathematics\\Indiana
    %University\\Bloomington, IN 47405}

%\email{cksuen@indiana.edu}

%\thanks{This work was completed with the support of our
%\TeX-pert.}
%----------Author 2
%\author{A Second Author}
%\address{The address of\br
%the second author\br
%sitting somewhere\br
%in the world}
%\email{dont@know.who.knows}
%----------classification, keywords, date
%\subjclass{35Q30}

%\keywords{Navier-Stokes equations; compressible flow; global weak solutions}

%\date{September 27, 2011}
%----------additions
%\dedicatory{To my family and my wife Candy}
%%% ----------------------------------------------------------------------

\title[Large friction limit in general domains]
 {Large friction limit of the compressible Navier-Stokes equations with Navier Boundary conditions in general three-dimensional domains}
 
\author{Anthony Suen} 

\address{Department of Mathematics and Information Technology\\
The Education University of Hong Kong}

\email{acksuen@eduhk.hk}

\date{July 1, 2020}

\keywords{Navier-Stokes equations; compressible flow; Navier boundary conditions; large friction limit}

\subjclass[2000]{35Q30} 

\begin{abstract}
In this paper, we study the Navier-Stokes equations of compressible, barotropic flow posed in a bounded set in $\R^3$ with different boundary conditions. Specifically, we prove that the local-in-time smooth solution of the Navier-Stokes equations with Navier boundary condition converges to the smooth solution of the Navier-Stokes equations with no-slip boundary condition as the Navier friction coefficient tends to infinity. 
\end{abstract}

%%% ----------------------------------------------------------------------
\maketitle
%%% ----------------------------------------------------------------------
%\tableofcontents
\section{Introduction}\label{Introduction section}

We prove that the local-in-time smooth solution of the Navier-Stokes equations with Navier boundary condition converges to the smooth solution of the Navier-Stokes equations with no-slip boundary condition as the Navier friction coefficient tends to infinity. The present work is intended as the first step in extending the convergence results of large friction limit from the incompressible flows to compressible flows.

The Navier-Stokes equations of a compressible, barotropic flow in a bounded domain $\Omega\subset \R^3$ give the conservation of mass and the balance of momentum:
\begin{align}
\label{Navier Stokes} \left\{ \begin{array}{l}
\rho_t + \divv (\rho \vu) =0, \\
(\rho u^j)_t + \divv (\rho u^j \vu) + (P(\rho))_{x_j} = \mu\,\Delta u^j +
\lambda \, (\divv \,\vu)_{x_j}.
\end{array}\right.
\end{align}
The above system \eqref{Navier Stokes} is solved together with initial conditions
\begin{align}\label{initial condition}
(\rho(\cdot,0),\vu(\cdot,0))=(\rho_0,\vu_0)
\end{align}
and is equipped with {\it either one} of the following boundary conditions, namely
\begin{align}\label{Navier condition}
\mbox{$\No\cdot \vu(x)=0$ and $K\mu(\nabla \vu\No)\cdot \Ta(x)=- \vu\cdot \Ta(x)$, \qquad for $x\in\partial\Omega$}
\end{align}
or
\begin{align}\label{no slip condition}
\vu(x)=0,\qquad\mbox{for $x\in\partial\Omega$}.
\end{align}
The meanings for the functions and symbols are given as follows:
\begin{itemize}
\item $\rho$ and $\vu=(u^1,u^2,u^3)$ are functions of $x\in\Omega$ and $t\ge0$ which represent density and velocity respectively;
\item $P = P(\rho)$ is a given function in $\rho$ which stands for the pressure;
\item $\mu$, $\lambda > 0$ are viscosity constants;
\item $(\cdot)_{x_j}$ and $(\cdot)_t$ stand for the spatial derivative $\frac{\partial}{\partial x_j}$ and time derivative $\frac{\partial}{\partial t}$ respectively;
\item $\divv(\cdot)$ and $\Delta(\cdot)$ are the usual spatial divergence and Laplace operators;
\item $\No(x)$ and $\Ta(x)$ are the unit outward normal and tangent vectors respectively on $\partial \Omega$;
\item $K>0$ is a constant and $\alpha:=\frac{1}{K}$ is known as the {\it Navier friction coefficient}.
\end{itemize}
To facilitate later discussions, we name the system \eqref{Navier Stokes}-\eqref{initial condition} with boundary condition \eqref{Navier condition} the {\it Navier-Stokes equations with Navier boundary condition} (NSENC), while we name the system \eqref{Navier Stokes}-\eqref{initial condition} with boundary condition \eqref{no slip condition} the {\it Navier-Stokes equations with Dirichlet boundary condition} (NSEDC).

The so-called Navier boundary condition \eqref{Navier condition} was first proposed by Navier in \cite{Navier23} which states that the velocity on $\partial\Omega$ is proportional to the tangential component of the stress, while the Dirichlet boundary condition \eqref{no slip condition} (or more precisely the {\it no-slip} boundary condition) assumes the fluid will have zero velocity relative to the boundary. Regarding the two different boundary conditions given by \eqref{Navier condition} and \eqref{no slip condition}, a very interesting but natural question arises:
\begin{itemize}
\item[$(\mathcal{Q})$:] {\it As the constant $K$ vanishes (or equivalently the Navier friction coefficient $\alpha$ tends to infinity), will the solutions to the Navier-Stokes equations with Navier boundary conditions converge to a solution to the Navier-Stokes equations with the usual no-slip boundary conditions?}
\end{itemize}
The present work is devoted to answer the question $(\mathcal{Q})$ by justifying the limit for smooth local-in-time solutions of \eqref{Navier Stokes}-\eqref{initial condition} with boundary condition \eqref{Navier condition} as $K$ tends to zero.

We first recall some known existence results regarding the system \eqref{Navier Stokes}-\eqref{initial condition}. Generally speaking, by imposing different conditions on the initial data, three types of solutions can be shown to exist and they enjoy different properties:
\begin{itemize}
\item[(i)] When the initial data is taken to be close to a constant in $H^3(\Omega)$, Matsumura and Nishida \cite{mn79, mn80, mn82} obtained global-in-time ``small-smooth type'' solutions, which were later extended by Danchin \cite{danchin00, danchin01} to solutions in certain scale-invariant homogeneous Besov spaces. Small-smooth type solutions are constructed via iterative procedure based on asymptotic decay rates for the corresponding linearised system, and they do not exhibit the generic singularities of the system. 
\item[(ii)] When the initial data is having arbitrarily large energy and with nonnegative density, Lions \cite{lions98} and Feireisl \cite{feireisl04} proved the existence of ``large-weak type'' solutions (also refer to \cite{huwang08} for improvements in such direction). Large-energy weak solutions by their very nature possess very little regularity, which may even include some non-physical solutions (see \cite{hoffserre91} for related discussions). 
\item[(iii)] Different from the two types of solutions as mentioned in (i) and (ii), a third type of ``intermediate-weak type'' solutions were studied by Hoff \cite{hoff95, hoff05}, Perepelitsa \cite{perepelitsa14}, Suen \cite{suen13, suen14, suen20b} and Suen-Hoff \cite{suenhoff12} for which initial data are small in fairly weak norms and initial densities are nonnegative and essentially bounded. Such intermediate-weak type solutions have rich physical and mathematical meanings compared to other solution classes: on the one hand, these solutions may exhibit discontinuities in density and velocity gradient across hypersurfaces in $\R^3$, which is not observable from small-smooth solutions; on the other hand, the solutions would still have enough regularity for the development of a uniqueness and continuous dependence theory \cite{cheungsuen16, hoff06, suen20a} which seems difficult from the very weak framework for large-weak type solutions. Moreover, such solutions demonstrate fine-structure property near the boundary under no-slip boundary condition \eqref{no slip condition} (refer to \cite{hoffperepelitsa12} for the instantaneous tangency for density interfaces), which behave drastically different from the case when the Navier boundary condition \eqref{Navier condition} is imposed (see \cite{hoff05}).
\end{itemize}
Our goal is to first address the convergence of smooth local-in-time solutions (which can be viewed as the category (i) solutions mentioned above but without the smallness assumption on the initial data), which is the central topic for this paper. The convergences of weak solutions from category (ii) and (iii) will be considered elsewhere.

The convergence issue raised by question $(\mathcal{Q})$ was studied by Kelliher \cite{kelliher06} and Kim \cite{kim09} for incompressible Navier-Stokes equations which proved the convergence of smooth solutions as $K$ vanishes. In contrast to the incompressible flows, the compressible system \eqref{Navier Stokes} contains the unknown density function $\rho$, which contributes to the following difficulties:
\begin{itemize}
\item one has to obtain {\it appropriate} bounds on $\rho$ in order to gain control over the velocity $\vu$, which is different from the incompressible case for which one can automatically have some control over $\vu$ on the boundary (see \cite[Lemma~9.1]{kelliher06});
\item one has to ensure some {\it $K$-independent} bounds on both $\rho$ and $\vu$ before taking the limit $K\to0$, which is highly non-trivial compared with the incompressible cases.
\end{itemize} 
In this present work, we try to extend the results from \cite{kelliher06} and \cite{kim09} to the case for compressible Navier-Stokes equations and obtain convergence of {\it both} densities and velocities. The main novelties are:
\begin{itemize}
\item We obtain bounds on local-in-time solutions to \eqref{Navier Stokes}-\eqref{initial condition} with boundary condition \eqref{Navier condition} which are {\it independent of} $K$;
\item We prove the strong convergence of both densities and velocities as $K$ vanishes, which extends and strengthen the results for incompressible Navier-Stokes equations.
\end{itemize} 

We now give a precise formulation of our results. The parameters $\Omega$, $\mu$, $\lambda$, $P$ and $K$ will be assumed to satisfy the following:
\begin{align}\label{condition on the domain bdry}
\mbox{$\Omega$ is a bounded open set in $\R^d$ with a $C^4$ boundary;}
\end{align}
\begin{align}\label{conditions on viscosity}
\mbox{$\mu>0$ and $\mu+3\lambda>0$;}
\end{align}
\begin{align}\label{condition on pressure}
\mbox{$P\in C([0, \infty))\cap C^2((0,\infty));$}
\end{align}
\begin{align}\label{condition on K}
\mbox{$K$ is independent of $x$ and $t$ with $K>0$.}
\end{align}
Concerning the initial data $(\rho_0,\vu_0)$, it will be assumed that there is a positive constant $M_0$ such that
\begin{align}\label{boundedness condition on initial data}
\|\rho_0\|_{L^\infty},\|\rho_0^{-1}\|_{L^\infty},\|\rho_0\|_{H^2},\|\vu_0\|_{H^3}\le M_0,
\end{align}
and $\vu_0$ further satisfies
\begin{align}\label{compatibility condition on u0 no-slip}
\vu_0(x)=0,\qquad\mbox{for $x\in\partial\Omega$}.
\end{align}

We have the following local-in-time existence theorem for smooth solutions to \eqref{Navier Stokes}. It gives the necessary bounds on the smooth solutions and will be useful for proving the convergence results later. The proof will be given in Section~\ref{existence section}. 

%The proof for the case of (NSENC) can be found in \cite{hoff11}, and the proof for the other case (NSEDC) can be found in \cite{tani77}.
\begin{theorem}\label{local-in-time existence theorem}
Assume that the hypotheses \eqref{condition on the domain bdry}-\eqref{condition on K} hold and let positive
numbers $M_0$ and $M_0'>M_0$ be given. Then there is a positive time $T>0$ and a constant $N$, both depending on $\Omega$, $\mu$, $\lambda$, $P$ $M_0$ and $M_0'$ but independent of $K$ such
that if initial data $(\rho_0,\vu_0)$ is given satisfying \eqref{boundedness condition on initial data}-\eqref{compatibility condition on u0 no-slip}, then there are solutions $(\rho, \vu)$ and $(\trho,\tu)$ to the initial-boundary value problems (NSENC) and (NSEDC) respectively which are defined on $\Omega \times [0, T]$ and they satisfy
\begin{align}\label{boundedness on rho local-in-time solution}
&\sup_{0\le t\le T} (\|\rho(\cdot,t)\|_{L^\infty},\|\rho^{-1}(\cdot,t)\|_{L^\infty},\|\rho(\cdot,t)\|_{H^2})\notag\\
&\qquad+\sup_{0\le t\le T} (\|\trho(\cdot,t)\|_{L^\infty},\|\trho^{-1}(\cdot,t)\|_{L^\infty},\|\trho(\cdot,t)\|_{H^2})\le M_0'
\end{align}
and
\begin{align}\label{boundedness on u local-in-time solution}
&\sup_{0\le t\le T} (\|\vu(\cdot,t)\|_{H^3}^2,\|\vu_t(\cdot,t)\|_{H^1}^2)+\int_0^T\|\vu_t(\cdot,t)\|_{H^2}^2dt\notag\\
&\qquad+\sup_{0\le t\le T} (\|\tu(\cdot,t)\|_{H^3}^2,\|\tu_t(\cdot,t)\|_{H^1}^2)+\int_0^T\|\tu_t(\cdot,t)\|_{H^2}^2dt\le N.
\end{align}
\end{theorem}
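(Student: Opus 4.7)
The plan is to construct both solutions by a linearization--iteration scheme and then close the a priori estimates with constants that are independent of $K$. Given a previous velocity iterate $\vw^n$, I would define $\rho^{n+1}$ as the unique solution of the linear transport equation $\rho^{n+1}_t + \divv(\rho^{n+1}\vw^n)=0$ with $\rho^{n+1}(\cdot,0)=\rho_0$ via the method of characteristics, and then define $\vu^{n+1}$ as the solution of the linear parabolic Lam\'e system obtained by freezing the nonlinear coefficients at $(\rho^{n+1},\vw^n)$, equipped with either boundary condition \eqref{Navier condition} or \eqref{no slip condition} and initial velocity $\vu_0$. Standard linear theory gives existence of each iterate in the required class; the entire difficulty lies in proving a priori bounds on $(\rho^{n+1},\vu^{n+1})$ matching \eqref{boundedness on rho local-in-time solution}--\eqref{boundedness on u local-in-time solution} on a short interval $[0,T]$, with $T$ and $M_0'$ independent of the iteration index and of $K$; a contraction in a weaker norm such as $L^2\times H^1$ then produces the solution.

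For the density, $\|\rho\|_{L^\infty}$ and $\|\rho^{-1}\|_{L^\infty}$ are controlled by the characteristic representation $\rho(X(t),t)=\rho_0(X(0))\exp\bigl(-\int_0^t\divv\vw^n\,ds\bigr)$, and the $H^2$ bound follows by differentiating the continuity equation twice and applying Gronwall with driving term $\|\vw^n\|_{W^{2,\infty}}$, which is controlled by $\|\vw^n\|_{H^3}$ via Sobolev embedding in $\R^3$. For the velocity, the basic energy identity obtained by testing the momentum equation against $\vu$ produces, upon integrating the viscous terms by parts, a boundary integral $\mu\int_{\partial\Omega}(\nabla\vu\,\No)\cdot\vu\,dS$ which under \eqref{Navier condition} equals $-K^{-1}\int_{\partial\Omega}|\vu|^2\,dS\le 0$ (the constraint $\vu\cdot\No=0$ forces $\vu$ to be tangential on $\partial\Omega$) and which under \eqref{no slip condition} vanishes outright. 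In either case this contribution has a favorable sign, so it moves to the left-hand side and can be discarded to give an upper bound independent of $K$. Time-differentiated versions, tested against $\vu_t$ and $\vu_{tt}$, yield $\sup_t\|\vu_t\|_{H^1}^2$ and $\int_0^T\|\vu_t\|_{H^2}^2\,dt$, the friction boundary term again carrying the sign $\le 0$. The $H^3$ bound on $\vu$ is finally extracted by viewing the momentum equation as a stationary elliptic Lam\'e system with forcing $\rho\vu_t+\rho(\vu\cdot\nabla)\vu+\nabla P(\rho)$ already controlled in $H^1$ and boundary data determined by \eqref{Navier condition} or \eqref{no slip condition}.

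The principal obstacle lies in this last step: showing that the elliptic regularity constant for the Lam\'e system with Navier boundary condition can be taken uniformly in $K$. A direct textbook estimate for a Robin-type boundary problem would pick up a factor involving $(K\mu)^{-1}$ from the coefficient in $(\nabla\vu\,\No)\cdot\Ta=-(K\mu)^{-1}\vu\cdot\Ta$, which blows up as $K\to 0$. The remedy is to absorb the friction contribution into the Dirichlet form associated to the Lam\'e operator, where $K^{-1}\int_{\partial\Omega}|\vu|^2$ appears with the correct sign; combined with a partition-of-unity localization near $\partial\Omega$, boundary flattening, and standard trace estimates, this yields an estimate of the schematic form $\|\vu\|_{H^3}\le C(\Omega,\mu,\lambda)\bigl(\|\text{forcing}\|_{H^1}+\|\vu\|_{H^1}\bigr)$ with $C$ independent of $K$. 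Coupled with the $H^1$ bound on $\vu_t$ and the density estimates, this closes the a priori bounds uniformly in $K$, and a standard continuation argument keeps them valid on $[0,T]$ for some $T$ depending only on $M_0$, $M_0'$, and the fixed parameters.

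The same scheme with boundary condition \eqref{no slip condition}, for which all boundary integrals vanish, produces $(\trho,\tu)$ solving (NSEDC) with analogous bounds, the compatibility \eqref{compatibility condition on u0 no-slip} making $\vu_t(\cdot,0)$ well-defined in $H^1$ from the equation. Taking $T$ to be the minimum of the two existence times and choosing $M_0'>M_0$ to leave room for short-time growth produces \eqref{boundedness on rho local-in-time solution}--\eqref{boundedness on u local-in-time solution}.
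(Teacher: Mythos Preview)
Your proposal is correct and follows essentially the same route as the paper. Both argue by an iteration alternating between the continuity equation (solved by characteristics, with $H^2$ control via Gronwall) and the momentum equation (energy estimates in which the Navier friction term $K^{-1}\int_{\partial\Omega}|\vu|^2$ enters with favorable sign and is discarded), and both pin the $K$-independence of the $H^3$ velocity bound on a uniform elliptic estimate for the Lam\'e operator under the Navier condition---exactly the content of the paper's Lemma~\ref{estimate on lame operator lemma}. The only organizational differences are that the paper constructs the velocity iterate by Galerkin approximation on eigenspaces of $\cL$ (following \cite{hoff11}) rather than by invoking linear parabolic theory directly, and obtains the top-order estimate by testing the momentum equation against $\cL\vu_t$ (its Step~4) rather than by time-differentiating and testing against $\vu_{tt}$ before applying elliptic regularity; both variants lead to the same bounds.
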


\begin{remark}
Here are some remarks regarding Theorem~\ref{local-in-time existence theorem}
\begin{itemize}
\item In view of Theorem~\ref{local-in-time existence theorem}, for the case of (NSENC), one can replace the compatibility condition \eqref{compatibility condition on u0 no-slip} on $\vu_0$ with a weaker assumption, namely
\begin{align*}
\mbox{$\No\cdot \vu_0(x)=0$ and $K\mu(\nabla \vu_0\No)\cdot \Ta(x)=- \vu_0\cdot \Ta(x)$, \qquad for $x\in\partial\Omega$.}
\end{align*}
\item The key idea of proving Theorem~\ref{local-in-time existence theorem} follows closely to the one given in \cite{hoff11}, the main difference here is to show the $K$-independence of the constants $M'_0$ and $N$.
\item It is important to have $M'_0$ and $N$ both being independent of $K$, which allow us to apply the bounds \eqref{boundedness on rho local-in-time solution}-\eqref{boundedness on u local-in-time solution} for proving the convergence of smooth solutions as $K$ vanishes.
\end{itemize}
\end{remark}

Once we obtain the local-in-time existence of smooth solutions to to the systems (NSENC) and (NSEDC), we proceed to study the convergence as $K$ vanishes. The following theorem is the main result of this paper.
\begin{theorem}\label{main thm}
Assume that the hypotheses \eqref{condition on the domain bdry}-\eqref{condition on K} hold and let $(\rho_0,\vu_0)$ be given functions which satisfy \eqref{boundedness condition on initial data}-\eqref{compatibility condition on u0 no-slip}. Suppose $(\rho,\vu)$ and $(\trho,\tu)$ are smooth local-in-time solutions to the systems (NSENC) and (NSEDC) respectively defined on $\Omega\times[0,T]$ with the same initial data $(\rho_0,\vu_0)$, as described by Theorem~\ref{local-in-time existence theorem}. Then there exists $T^*\in(0,T]$ such that for any $s_1\in[0,2]$ and $s_2\in[0,1]$,
\begin{align}\label{convergence of u in Hs}
\vu\to\tu\mbox{ in $L^\infty([0,T^*];H^{s_1}(\Omega))$ as $K\to0$,}
\end{align}
%\begin{align}\label{convergence of u in space time}
%\vu\to\tu\mbox{ in $L^2([0,T^*];H^1(\Omega))$ as $K\to0$,}
%\end{align}
\begin{align}\label{convergence of u on the boundary}
\vu\to\tu\mbox{ in $L^2([0,T^*];L^2(\partial\Omega))$ as $K\to0$,}
\end{align}
and
\begin{align}\label{convergence of rho in Hs}
\rho\to\trho\mbox{ in $L^\infty([0,T^*];H^{s_2}(\Omega))$ as $K\to0$.}
\end{align}
\end{theorem}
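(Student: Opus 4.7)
The plan is to derive an $L^2$ energy estimate for the difference $(\sigma,\vw):=(\rho-\trho,\vu-\tu)$ using the Navier boundary condition, close it by Gronwall, and then interpolate against the $K$-independent higher-norm bounds of Theorem~\ref{local-in-time existence theorem}. First I would subtract the two mass equations to get
\begin{equation*}
\sigma_t+\vu\cdot\nabla\sigma+\sigma\,\divv\vu+\trho\,\divv\vw+\vw\cdot\nabla\trho=0,
\end{equation*}
and subtract the two momentum equations, written in the form $\rho\dot u^j+(P(\rho))_{x_j}=\mu\Delta u^j+\lambda(\divv\vu)_{x_j}$, to obtain
\begin{equation*}
\rho(\vw_t+\vu\cdot\nabla\vw)+\sigma(\tu_t+\tu\cdot\nabla\tu)+\rho(\vw\cdot\nabla)\tu+\nabla\bigl(P(\rho)-P(\trho)\bigr)=\mu\Delta\vw+\lambda\nabla\divv\vw.
\end{equation*}
Multiplying by $\sigma$ and $\vw$ respectively and integrating over $\Omega$, the condition $\vu\cdot\No=\tu\cdot\No=0$ on $\partial\Omega$ kills the transport, pressure, and $\lambda$-divergence boundary contributions, while the interior terms are controlled by Young's inequality because the background quantities $\|\vu,\tu\|_{W^{1,\infty}}$, $\|\dot\tu\|_{L^\infty}$, $\|\trho\|_{W^{1,\infty}}$, $\|\rho^{-1}\|_{L^\infty}$ and $|P'|$ are all uniformly bounded in $K$ thanks to Theorem~\ref{local-in-time existence theorem}.

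The main obstacle is the boundary term $\mu\int_{\partial\Omega}(\nabla\vw\,\No)\cdot\vw$ arising from integrating $\mu\Delta\vw\cdot\vw$ by parts. Since $\tu|_{\partial\Omega}=0$, one has $\vw=\vu$ on $\partial\Omega$, so $\vw$ satisfies the no-penetration condition (hence the $\lambda$-divergence boundary term vanishes) but not the tangential Navier relation. Decomposing $\vu$ on $\partial\Omega$ into its tangential components and invoking $K\mu(\nabla\vu\,\No)\cdot\Ta=-\vu\cdot\Ta$ for each tangent direction yields
\begin{equation*}
\mu\int_{\partial\Omega}(\nabla\vw\,\No)\cdot\vw=-\frac{1}{K}\int_{\partial\Omega}|\vu|^2-\mu\int_{\partial\Omega}(\nabla\tu\,\No)\cdot\vu,
\end{equation*}
the first term serving as helpful dissipation. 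The awkward second term I would split by Young's inequality into $\tfrac{1}{2K}\|\vu\|_{L^2(\partial\Omega)}^2+\tfrac{K\mu^2}{2}\|\nabla\tu\|_{L^2(\partial\Omega)}^2$, absorbing the first half into the good term and bounding the second half by $CK$ via the trace inequality together with the $K$-independent $H^3(\Omega)$ bound on $\tu$ supplied by Theorem~\ref{local-in-time existence theorem}.

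Combining the two integrated equations then produces a closed differential inequality
\begin{equation*}
\frac{d}{dt}\!\Bigl(\|\sqrt{\rho}\,\vw\|_{L^2}^2+\|\sigma\|_{L^2}^2\Bigr)+\mu\!\int_\Omega|\nabla\vw|^2+\frac{1}{K}\!\int_{\partial\Omega}|\vu|^2\le C\bigl(\|\sqrt{\rho}\,\vw\|_{L^2}^2+\|\sigma\|_{L^2}^2\bigr)+CK
\end{equation*}
with $C$ independent of $K$, so Gronwall applied with coinciding initial data yields $\sup_{[0,T^*]}(\|\vw\|_{L^2}^2+\|\sigma\|_{L^2}^2)=O(K)$ and $\int_0^{T^*}\!\|\vu\|_{L^2(\partial\Omega)}^2\,dt=O(K^2)$ for every $T^*\in(0,T]$. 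This immediately gives \eqref{convergence of u on the boundary} (because $\tu|_{\partial\Omega}=0$) and the $s_1=s_2=0$ cases of \eqref{convergence of u in Hs} and \eqref{convergence of rho in Hs}. The remaining cases follow by interpolating the $L^2$ rates against the uniform higher-norm bounds of Theorem~\ref{local-in-time existence theorem}: the inequalities $\|\vw\|_{H^{s_1}}\le C\|\vw\|_{L^2}^{1-s_1/3}\|\vw\|_{H^3}^{s_1/3}$ for $s_1\in[0,3)$ and $\|\sigma\|_{H^{s_2}}\le C\|\sigma\|_{L^2}^{1-s_2/2}\|\sigma\|_{H^2}^{s_2/2}$ for $s_2\in[0,2)$ cover the ranges $s_1\in[0,2]$ and $s_2\in[0,1]$ with uniform-in-$t$ convergence, as claimed.
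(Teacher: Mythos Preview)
Your strategy is correct and follows the same overall energy--difference scheme as the paper (subtract the two systems, multiply by $\sigma$ and $\vw$, close by Gronwall, then interpolate against the uniform $H^3\times H^2$ bounds). The one substantive difference is in how you handle the boundary term and extract the smallness in $K$. The paper splits this into two steps: first it proves a separate lemma showing $\int_0^t\!\int_{\partial\Omega}|\vu|^2\le M_0K$ directly from the energy identity for $(\rho,\vu)$ alone (this is where the restriction to a possibly smaller $T^*$ enters), and only afterwards feeds this into the difference estimate, where the cross term $\mu\int_{\partial\Omega}(\nabla\tu\,\No)\cdot\vu$ is bounded crudely by $C\bigl(\int_{\partial\Omega}|\vu|^2\bigr)^{1/2}$. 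Your weighted Young split $\tfrac{1}{2K}\|\vu\|_{L^2(\partial\Omega)}^2+\tfrac{K\mu^2}{2}\|\nabla\tu\|_{L^2(\partial\Omega)}^2$ is more efficient: it avoids the auxiliary lemma entirely, works on the full interval $[0,T]$ with no further restriction on $T^*$, and yields the sharper rates $\|\vw\|_{L^2}^2+\|\sigma\|_{L^2}^2=O(K)$ and $\int_0^T\|\vu\|_{L^2(\partial\Omega)}^2=O(K^2)$, whereas the paper obtains only $O(K^{1/2})$ for the $L^2$ difference.

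One point to tighten: Theorem~\ref{local-in-time existence theorem} gives $\trho\in L^\infty_tH^2_x$ and $\tu_t\in L^\infty_tH^1_x\cap L^2_tH^2_x$, which in three dimensions does \emph{not} give you $\|\nabla\trho\|_{L^\infty}$ or $\|\tu_t\|_{L^\infty}$ uniformly in time. The terms $\int_\Omega\sigma\,\vw\cdot\nabla\trho$ and $\int_\Omega\sigma\,\tu_t\cdot\vw$ are still easily controlled---use $\|\nabla\trho\|_{L^3}\|\sigma\|_{L^2}\|\vw\|_{L^6}$ and either $\|\tu_t\|_{L^6}\|\sigma\|_{L^2}\|\vw\|_{L^3}$ or (as the paper does) $\|\tu_t\|_{H^2}\|\sigma\|_{L^2}\|\vw\|_{L^2}$ together with $\int_0^T\|\tu_t\|_{H^2}^2\le N$---but your stated justification via $W^{1,\infty}$ and $L^\infty$ bounds on those quantities should be replaced accordingly.
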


\begin{remark}
As a consequence of Theorem~\ref{main thm}, if we define the energy functional $E(\rho,\vu,t)$ by
\begin{align*}
E(\rho,\vu,t)=\sup_{0\le s\le t}\left[\int_{\Omega}(|\vu|^2+|\rho|^2)(x,s)dx\right]+\int_0^t\int_{\Omega}|\nabla \vu|^2(x,s)dxds,
\end{align*}
then for all $t\in[0,T^*]$, we have $E(\rho,\vu,t)\to E(\trho,\tu,t)$ as $K\to0$. 
\end{remark}

The rest of the paper is organised as follows. In Section~\ref{prelim section}, we give some preliminary facts and  definitions used in this paper and provide an estimate for the Lam\'{e} operator defined in \eqref{def of lame}. In Section~\ref{existence section}, we obtain bounds on $\rho$ and $\vu$ which are independent of $K$ and prove Theorem~\ref{local-in-time existence theorem}. Finally in Section~\ref{convergence section}, we prove the convergence of smooth solutions to \eqref{Navier Stokes} as $K$ vanishes by making use of the bounds obtained in Theorem~\ref{local-in-time existence theorem}, thereby proving Theorem~\ref{main thm}. 

\section{Preliminaries and notations}\label{prelim section}

We introduce the following notations and conventions:
\begin{itemize}
\item For $s\ge0$ and $p\ge1$, $W^{s,p}(\Omega)$ is the usual Sobolev space with norm $\|\cdot\|_{W^{s,p}(\Omega)}$ given by
\begin{align*}
\|\cdot\|_{W^{s,p}(\Omega)}(f):=\sum_{|\beta|\le s} \Big(\intox|D^{\beta}_x f(x)|^pdx\Big)^\frac{1}{p}.
\end{align*}
We write $H^s(\Omega):=W^{s,2}(\Omega)$. For simplicity, we also write $H^s=H^s(\Omega)$, $\|\cdot\|_{L^p}=\|\cdot\|_{L^p(\Omega)}$, $\|\cdot\|_{W^{s,p}}=\|\cdot\|_{W^{s,p}(\Omega)}$, etc. unless otherwise specified.
\item We adopt the usual notation for H\"older seminorms, namely for $v:\R^3\to \R^3$ and $\beta \in (0,1]$, 
\hfill
$$\langle v\rangle^\beta = \sup_{{x_1,x_2\in 
\R^3}\atop{x_1\not=x_2}}
{{|v(x_2) -v(x_1)|}\over{|x_2-x_1|^\beta}}\,;$$
and for $v:Q\subseteq\R^3 \times[0,\infty)\to \R^3$ and $\beta_1,\beta_2 \in (0,1]$,
\hfill
$$\langle v\rangle^{\beta_1,\beta_2}_{Q} = \sup_{{(x_1,t_1),(x_2,t_2)\in 
Q}\atop{(x_1,t_1)\not=(x_2,t_2)}}
{{|v(x_2,t_2) - v(x_1,t_1)|}\over{|x_2-x_1|^{\beta_1} + |t_2-t_1|^{\beta_2}}}\,.$$
\item Regarding the constants used in this work, $C$ shall denote a positive and sufficiently large constant, whose value may change from line to line.
\end{itemize}

We recall the following standard facts (see \cite{evans10} and \cite{ziemer89} for example) which will be useful for later analysis:
\begin{itemize}
\item There is a constant $C = C(\Omega)$ such that for all $\varphi\in H^2$,
\begin{align}\label{bound on L infty norm}
\|\varphi\|_{L^\infty}\le C\|\varphi\|_{H^2}.
\end{align}
\item There is a constant $C=C(\Omega)$ such that for $p\in[2,6]$ and $\varphi\in H^1$,
\begin{align}\label{embedding from Lp to H1}
\|\varphi\|_{L^p}\le C\Big(\|\varphi\|_{L^2}+\|\varphi\|_{L^2}^\frac{6-p}{2p}\|\nabla\varphi\|_{L^2}^\frac{3p-6}{2p}\Big).
\end{align}
\item Assume that $\partial\Omega$ is $C^1$, then there exists a constant $C = C(\Omega)$ such that for all $\varphi\in W^{1,1}$,
\begin{align}\label{bound on bdry integral}
\int_{\partial\Omega}|\varphi(x)|^2dS_x\le C\intox\{|\varphi(x)|^2+|\varphi(x)||\nabla\varphi(x)|\}dx.
\end{align}
\end{itemize}

For $\mu>0$ and $\mu+3\lambda>0$, we let $\cL$ denote the Lam\'{e} operator given by
\begin{align}\label{def of lame}
(\cL \vu)^j=\mu\Delta u^j + \lambda \divv(\vu_{x_j}).
\end{align}
With respect to the Lam\'{e} operator $\cL$, we define the system
\begin{align}
\label{Lame system} \left\{ \begin{array}{l}
\cL \vu = -g \mbox{\,\,\,in $\Omega$,}\\
\mbox{$\No\cdot \vu=0$ and $K\mu(\nabla \vu\No)\cdot \Ta=- \vu\cdot \Ta$ on $\partial\Omega$,}
\end{array}\right.
\end{align}
where $\vu:\bar{\Omega}\to\R^3$ is the unknown function and $g:\Omega\to\R^3$ is given. 

The following lemma gives an estimate on $\vu$ in terms of $g$ which is crucial for obtaining bounds on solutions to the system (NSENC) later.
 
\begin{lemma}\label{estimate on lame operator lemma}
Assume that $\mu>0$ and $\mu+3\lambda>0$ and let $m=0$ or 1. Assume in addition that $\Omega$ is a bounded open set in $\R^3$ with a $C^{m+3}$ boundary. Then there is a constant $C = C(\Omega,\mu,\lambda)$ independent of $K$ such that if $\vu$ is a solution of \eqref{Lame system} with $g \in H^m(\Omega)$, then $\vu \in H^{m+2}(\Omega)$ and
\begin{align}\label{elliptic estimate on lame operator}
\|\vu\|_{H^{m+2}}\le C(\|\vu\|_{L^2}+\|g\|_{H^m})
\end{align}
\end{lemma}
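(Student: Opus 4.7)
The strategy is to combine a base $H^1$ energy estimate with interior and boundary elliptic regularity, keeping explicit track of the $K$-dependence at every stage.

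First I would test the equation $\cL\vu=-g$ against $\vu$ and integrate by parts. The bulk terms produce $\mu\|\nabla\vu\|_{L^2}^2+\lambda\|\divv\vu\|_{L^2}^2$, which is coercive on $\|\nabla\vu\|_{L^2}^2$ because $\mu>0$ and $\mu+3\lambda>0$ together force $\mu+\lambda>0$ (both cases $\lambda\ge 0$ and $\lambda<0$ are easy). The $\lambda$-piece contributes no boundary integral since $\No\cdot\vu=0$, while the boundary contribution from $\mu\Delta\vu$, after using $\No\cdot\vu=0$ once more and invoking the Navier condition, collapses to $-\tfrac{1}{K}\|\vu\|_{L^2(\partial\Omega)}^2$. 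Thus
$$\mu\|\nabla\vu\|_{L^2}^2+\lambda\|\divv\vu\|_{L^2}^2+\tfrac{1}{K}\|\vu\|_{L^2(\partial\Omega)}^2=\intox g\cdot\vu\,dx,$$
and dropping the favorably-signed boundary term yields $\|\vu\|_{H^1}\le C(\|\vu\|_{L^2}+\|g\|_{L^2})$ with $C$ independent of $K$.

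For the upgrade to $H^{m+2}$ I would localize with a partition of unity. The interior pieces are handled by classical interior regularity for the strongly elliptic system $\cL$, which is blind to the boundary condition and therefore $K$-free. Near $\partial\Omega$ I would flatten each chart by a $C^{m+3}$ diffeomorphism; in the resulting half-space, with boundary $\{x_3=0\}$ and $\No=-e_3$, the Navier condition becomes $u^3=0$ together with the Robin-type identities $K\mu\,\partial_3u^\alpha=u^\alpha$ for $\alpha=1,2$, a structure preserved by purely tangential derivatives $\partial_\beta^j$ ($\beta\in\{1,2\}$, $j\le m+1$) up to lower-order commutators with the chart and cutoff. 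Repeating the first-paragraph energy identity for $\partial_\beta^j\vu$ again produces a favorably-signed boundary square $\tfrac{1}{K}\|\partial_\beta^j\vu\|_{L^2(\partial\Omega)}^2\ge 0$, which is discarded, yielding $K$-independent control of all tangential derivatives of $\vu$ up to order $m+2$. The missing pure-normal derivatives are then recovered algebraically from $\cL\vu=-g$: in flat coordinates the coefficients of $\partial_3^2u^\alpha$ and $\partial_3^2u^3$ are $\mu$ and $\mu+\lambda$ respectively, both strictly positive under the hypotheses, so $\partial_3^2u^j$ can be expressed in terms of $g^j$ and derivatives containing at least one tangential direction, all already controlled. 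Summing over the partition of unity delivers \eqref{elliptic estimate on lame operator}.

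I expect the main technical obstacle to be the localization/flattening step, namely verifying that the chart changes, cutoffs, and tangential differentiations never generate an uncontrolled $K^{-1}$-weighted term. The structural reason the argument closes is that $K$ enters only through the Navier condition and the resulting $K^{-1}$ factor always appears on the coercive side of the energy identity; should a commutator nevertheless produce a term proportional to $\tfrac{1}{K}(\vu\cdot\Ta)$, one can trade it via the Navier condition itself for $-\mu(\nabla\vu\,\No)\cdot\Ta$, an object free of $K$ and bounded by the $H^2$-norm already obtained at the previous induction step.
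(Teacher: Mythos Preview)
Your proposal is correct and follows essentially the same approach as the paper: both begin with the energy identity obtained by testing $\cL\vu=-g$ against $\vu$, observe that the Navier condition produces the favorably-signed boundary term $K^{-1}\|\vu\|_{L^2(\partial\Omega)}^2$ which can be discarded to yield a $K$-independent $H^1$ bound, and then appeal to standard elliptic regularity for the Lam\'e system near the boundary. The paper simply cites \cite{hoff11}, \cite{ADN64}, \cite{wehrheim04} for the higher-order step, whereas you have spelled out the tangential-differentiation/normal-recovery argument explicitly; this is the same machinery, just unpacked.
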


\begin{proof}
The proof is almost identical to the one given in \cite[Lemma~2.2]{hoff11} (also refer to \cite{ADN64} and \cite{wehrheim04} for more details). To see why the constant $C$ is independent of $K$, we give the estimates on $\|\nabla\vu\|_{L^2}$ as an example. We multiply \eqref{Lame system}$_1$ by $\vu$ and integrate to get
\begin{align}\label{H1 estimate on u lame system}
\intox\cL \vu\cdot \vu dx=-\intox g\cdot\vu dx.
\end{align}
Upon integrating by parts and applying the boundary condition \eqref{Lame system}$_2$, the integral on the left of \eqref{H1 estimate on u lame system} can be rewritten as follows
\begin{align*}
\intox\cL \vu\cdot \vu dx=-\alpha\intoxb|\vu|^2dS_x-\intox[\mu|\nabla\vu|^2+\lambda(\divv(\vu))^2]dx,
\end{align*}
where $\alpha=\frac{1}{K}$. Hence \eqref{H1 estimate on u lame system} becomes
\begin{align*}
\intox[\mu|\nabla\vu|^2+\lambda(\divv(\vu))^2]dx&=\intox g\cdot\vu dx-\alpha\intoxb|\vu|^2dS_x\\
&\le\intox g\cdot\vu dx,
\end{align*}
where the last inequity follows since $\alpha>0$. Therefore, we can see that $\|\nabla\vu\|_{L^2}$ can be bounded in terms of $\|g\|_{L^2}$ and is independent of $K$, which gives the required interior regularity estimates for $\vu$. The case when $\vu$ is supported in the intersection of $\Omega$ with a small neighborhood of a point on the boundary $\partial\Omega$ follows by the same argument given in \cite{hoff11} (which is somewhat simpler in our case here since $K$ is now just a positive constant) and we omit the details for the sake of brevity.
\end{proof}

\begin{remark}
We observe that $K$ (or equivalently $\alpha$) has the {\it correct} sign, which allows us to discard those corresponding boundary terms appeared in the above analysis. This is also important for proving the $K$-independence for the constant $C$ as in \eqref{elliptic estimate on lame operator}.
\end{remark}

\section{Existence of local-in-time smooth solutions: Proof of Theorem~\ref{local-in-time existence theorem}}\label{existence section}

In this section, we give the proof of Theorem~\ref{local-in-time existence theorem}. We only focus on the system (NSENC) since the local-in-time existence of (NSEDC) can be proved in the same way as in \cite{tani77}. Throughout this section $M_0$ will be fixed as in the statement of Theorem~\ref{local-in-time existence theorem}

We first give the following definitions of function spaces which will be useful for later analysis. More details can be found in \cite{hoff11}. 

\begin{definition}
For $M\ge M_0$ and $T>0$, $\seta$ is the set of maps $\rho:[0,T]\to H^2(\Omega)$ such that $\rho(\cdot,0)=\rho_0$, $\rho\in C([0,T];H^1(\Omega))$, $\rho_t\in C([0,T];L^2(\Omega))$, and 
\begin{align}\label{bound on rho def}
\sup_{0\le t\le T}(\|\rho(\cdot,t)\|_{L^\infty},\|\rho^{-1}(\cdot,t)\|_{L^\infty},\|\rho(\cdot,t)\|_{H^2})\le M.
\end{align}
And for $N>0$, $\setb$ is the set of maps $\vu:[0,T]\to H^3(\Omega)$ such that $\No\cdot \vu(x)=0$, $\vu(\cdot,0)=\vu_0$, $\vu\in C([0,T];H^2(\Omega))$, $\vu_t\in C([0,T];L^2(\Omega))$ and
\begin{align}\label{bound on u def}
\sup_{0\le t\le T}(\|\vu(\cdot,t)\|^2_{H^3},\|\vu_t(\cdot,t)\|^2_{H^1})+\int_0^T\|\vu_t(\cdot,t)\|^2_{H^2}\le N.
\end{align}
\end{definition}
We recall the following theorem from \cite[Theorem~3.2]{hoff11} which shows that given $\vu\in\setb$, there is a corresponding solution $\rho$ to the mass equation \eqref{Navier Stokes}$_1$ with initial data $\rho_0$.

\begin{theorem}\label{estimate on rho theorem}
Given $N>0$ and $M'_0>M_0$, there is $T_1=T_1(M_0,M'_0,N)>0$ such that if $\vu\in\setb$ for some $T>0$, then there is a unique $\rho\in\tilde\sum_1^{M'_0,\min\{T_1,T\}}$ such that the pair $(\rho,\vu)$ satisfies the equation \eqref{Navier Stokes}$_1$ such that
\begin{align}\label{bound on nabla t rho thm}
\int_0^{\min\{T_1,T\}}|\nabla\rho_t|^2dxdt\le 1
\end{align}
and 
\begin{align}\label{bound on rho t thm}
\sup_{0\le t\le\min\{T_1,T\}}\|\rho_t(\cdot,t)\|_{L^2}\le C_1M_0^2
\end{align}
for a constant $C_1>0$ which depends only on $\Omega$.  Also, there is a constant $C=C(M_0,M'_0,N)$ such that
\begin{align}\label{holder continuity of rho thm}
\langle\rho\rangle^{\frac{1}{2},\frac{1}{2}}_{\bar\Omega\times[0,T]}\le C
\end{align}
and
\begin{align}\label{continuity of rho and rhot in time thm}
\|\rho(\cdot,t_1)-\rho(\cdot,t_2)\|_{H^1},\|\rho_t(\cdot,t_1)-\rho_t(\cdot,t_2)\|_{L^2}\le C|t_2-t_1|
\end{align}
for all $t_1$, $t_2\in[0,\min\{T_1,T\}]$.
\end{theorem}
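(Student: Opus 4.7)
\textbf{Proof proposal for Theorem~\ref{estimate on rho theorem}.}

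The plan is to construct $\rho$ by the method of characteristics and then deduce all the listed bounds by energy estimates, exploiting the smallness of the time interval to absorb constants. Since $\vu\in\setb$ and $\setb\hookrightarrow C([0,T];H^2)$ with $H^2(\Omega)\hookrightarrow L^\infty$, the flow map $X(t;y)$ defined by $\dot X=\vu(X,t)$, $X(0;y)=y$, exists on a short time interval and is a $C^1$-diffeomorphism of $\bar\Omega$ (note $\No\cdot\vu=0$, so $\Omega$ is invariant). Along this flow the equation $\rho_t+\divv(\rho\vu)=0$ reduces to $\frac{d}{dt}\rho(X(t;y),t)=-\rho\,\divv\vu(X(t;y),t)$, which integrates to
\begin{equation*}
\rho(X(t;y),t)=\rho_0(y)\exp\!\Bigl(-\int_0^t \divv\vu(X(s;y),s)\,ds\Bigr).
\end{equation*}
Uniqueness is immediate from this representation, and the standard existence argument (Picard iteration in the Lagrangian variable, or a linear-in-$\rho$ fixed-point argument) yields a solution in the stated regularity class.

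Next I would derive each of the bounds. First, from the characteristic formula and $|\divv\vu|\le C\|\vu\|_{H^3}\le C\sqrt{N}$,
\begin{equation*}
\|\rho(\cdot,t)\|_{L^\infty}\le \|\rho_0\|_{L^\infty}\,e^{Ct\sqrt{N}},\qquad \|\rho^{-1}(\cdot,t)\|_{L^\infty}\le \|\rho_0^{-1}\|_{L^\infty}\,e^{Ct\sqrt{N}},
\end{equation*}
so both are $\le \tfrac12(M_0+M_0')$ once $t\le T_1$ with $T_1$ small, depending only on $M_0,M_0',N$. For the $H^2$ bound I would differentiate the mass equation: for a multi-index $\beta$ with $|\beta|\le 2$,
\begin{equation*}
\partial_t D^\beta\rho+\vu\cdot\nabla D^\beta\rho=-\bigl[D^\beta,\vu\cdot\nabla\bigr]\rho-D^\beta(\rho\,\divv\vu),
\end{equation*}
multiply by $D^\beta\rho$, integrate over $\Omega$, integrate by parts the transport term (using $\No\cdot\vu=0$ to kill the boundary contribution), and control the commutator and the right-hand side by $C(\|\vu\|_{H^3})\|\rho\|_{H^2}^2$. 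Gronwall then gives $\|\rho(\cdot,t)\|_{H^2}\le \|\rho_0\|_{H^2}e^{CT_1\sqrt{N}}$, which is $\le M_0'$ once $T_1$ is taken small. The bound \eqref{bound on rho t thm} on $\|\rho_t\|_{L^2}$ follows immediately by reading off $\rho_t=-\vu\cdot\nabla\rho-\rho\,\divv\vu$ and using Sobolev embeddings together with $\|\rho_0\|_{H^2}\le M_0$; the constant $C_1$ is purely geometric since one estimates $\|\rho_t\|_{L^2}\le C\|\vu\|_{H^2}\|\rho\|_{H^2}$ at $t=0$ and then propagates by continuity. For \eqref{bound on nabla t rho thm}, differentiate once more: $\nabla\rho_t=-\nabla(\vu\cdot\nabla\rho)-\nabla(\rho\,\divv\vu)$, so by the product and Sobolev inequalities $\|\nabla\rho_t\|_{L^2}^2\le C(M_0')\|\vu\|_{H^3}^2$, and integrating over $[0,T_1]$ the right side is $\le C(M_0')N\,T_1\le 1$ after shrinking $T_1$.

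The Hölder bound \eqref{holder continuity of rho thm} comes from two observations: spatial Hölder-$\tfrac12$ regularity is already contained in $\|\rho(\cdot,t)\|_{H^2}\le M_0'$ via $H^2\hookrightarrow C^{0,1/2}$ in three dimensions, and temporal Hölder-$\tfrac12$ regularity is even better—one in fact gets Lipschitz in $t$ with values in $L^\infty$ from $\|\rho_t\|_{L^\infty}\le C\|\rho\|_{H^2}\|\vu\|_{H^3}$. The continuity estimate \eqref{continuity of rho and rhot in time thm} is obtained by integrating $\rho_t$ in time for the first factor and by differentiating the evolution equation $\rho_{tt}=-\vu_t\cdot\nabla\rho-\vu\cdot\nabla\rho_t-\rho_t\divv\vu-\rho\divv\vu_t$ and estimating $\|\rho_{tt}\|_{L^2}$ using $\|\vu_t\|_{H^1}\le\sqrt{N}$ together with the $H^2$-bound on $\rho$ and the $L^2$-bound on $\rho_t$ just established.

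The main obstacle I anticipate is the $H^2$ energy estimate: controlling the commutator $[D^2,\vu\cdot\nabla]\rho$ needs the full strength of $\vu\in H^3$, and one must be careful that the constant produced is independent of $K$—which it is, because $K$ does not appear anywhere in this estimate (all bounds on $\vu$ enter only through $\|\vu\|_{H^3}\le\sqrt{N}$, and $N$ is supplied as an input). Everything else is routine provided one chooses $T_1=T_1(M_0,M_0',N)$ small enough at each step so that exponential factors $e^{CT_1\sqrt{N}}$ remain below the target thresholds.
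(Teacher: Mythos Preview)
The paper does not give its own proof of this theorem: it is stated verbatim as a recollection of \cite[Theorem~3.2]{hoff11}, with no argument supplied. Your sketch---characteristics for the pointwise $L^\infty$ bounds on $\rho$ and $\rho^{-1}$, a commutator/energy estimate for the $H^2$ norm, and direct differentiation of the mass equation for the bounds on $\rho_t$ and $\rho_{tt}$---is the standard route and is essentially what the cited reference does; in particular your observation that $K$ never enters (all dependence on $\vu$ is through $\|\vu\|_{H^3}\le\sqrt{N}$) is exactly the point the present paper needs.
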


Next, we reverse the role of $\rho$ and $\vu$ and obtain estimates on $\vu$ determined by a given density $\rho\in\seta$. The results are summarised in the following lemma:

\begin{lemma}\label{estimate on u lemma}
Given $M\ge M_0$, there is a positive time $T_2 = T_2(M_0,M)$ and a constant $N = N(M_0,M)$ independent of $K$ such that if $\rho\in\seta$ and satisfies \eqref{bound on nabla t rho thm}-\eqref{continuity of rho and rhot in time thm} for some $T>0$, then there is a unique $\vu\in\tilde\sum_2^{N,\min\{T_2,T\}}$ with $\vu(\cdot,t)=\vu_0$ satisfying
\begin{align}\label{holder estimates on u in space time}
\langle\vu\rangle^{\frac{1}{2},\frac{1}{4}}_{\bar\Omega\times[0,T]}\le N,
\end{align}
\begin{align}\label{continuity in time for u and ut}
\|\vu(\cdot,t_2)-\vu(\cdot,t_1)\|_{H^2},\|\vu_t(\cdot,t_2)-\vu_t(\cdot,t_1)\|_{L^2}\le N|t_2-t_1|^\frac{1}{2},
\end{align}
for $t_1$, $t_2\in[0,\min\{T_2,T\}]$.
\end{lemma}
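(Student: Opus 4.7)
My plan is to construct $\vu$ by iteration on a linearised momentum equation and then extract the stated regularity from a priori energy estimates, closely paralleling the strategy of \cite{hoff11} but with careful bookkeeping of the Navier boundary term. Given $\rho\in\seta$ satisfying \eqref{bound on nabla t rho thm}--\eqref{continuity of rho and rhot in time thm}, I would set up the iteration
\begin{equation*}
\rho \vu^{(n+1)}_t + \rho(\vu^{(n)}\cdot\nabla)\vu^{(n+1)} + \nabla P(\rho) = \cL \vu^{(n+1)}
\end{equation*}
on $\Omega$, subject to the Navier boundary condition \eqref{Navier condition} and $\vu^{(n+1)}(\cdot,0)=\vu_0$. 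Because $\rho$ is bounded above and below and $\vu^{(n)}$ enters only as a coefficient, each step is a linear parabolic system for which existence in $\setb$ can be established by Galerkin approximation using eigenfunctions of $\cL$ with the Navier boundary condition. A fixed point in $\setb$ with a suitably chosen $N$ follows by a contraction argument in a weaker norm, once uniform $\setb$-bounds are in hand.

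The heart of the proof is the derivation of the $N$-bound, independent of $K$. Testing the momentum equation with $\vu$ and integrating by parts, the contribution of $\cL \vu$ produces exactly the boundary integral $\alpha\mu\intoxb|\vu\cdot\Ta|^2\,dS_x$, which has the \emph{correct} sign and can be discarded. This yields a $K$-independent bound on $\|\sqrt\rho\,\vu\|_{L^2}^2+\int_0^t\|\nabla\vu\|_{L^2}^2$. To reach the $H^1$-in-time and $H^2$-in-space levels I would then test with $\vu_t$, producing $\intoxt\rho|\vu_t|^2$ on the left plus, again, a favourable boundary contribution $\tfrac12\frac{d}{dt}\alpha\mu\intoxb|\vu\cdot\Ta|^2\,dS_x$, which when integrated in time is bounded by its initial value; here the compatibility condition \eqref{compatibility condition on u0 no-slip} is what makes this initial boundary integral vanish, so the resulting bound is independent of $K$. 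Differentiating the equation in $t$ and testing with $\vu_t$ gives the $L^\infty_t L^2_x$-bound on $\vu_t$ together with the $L^2_t H^1_x$-bound, where $\|\vu_t(\cdot,0)\|_{L^2}$ is controlled by reading off $\vu_t(\cdot,0)$ from the equation.

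To upgrade to the stated $\vu\in L^\infty_t H^3_x$ and $\vu_t\in L^\infty_t H^1_x\cap L^2_t H^2_x$ bounds, I would rewrite the momentum equation as $\cL\vu = \rho\vu_t+\rho(\vu\cdot\nabla)\vu+\nabla P(\rho)$ and apply the elliptic estimate of Lemma~\ref{estimate on lame operator lemma} twice: once with $m=0$ to obtain the $H^2$-bound in terms of the already controlled $L^2$-norm of the right-hand side, and once with $m=1$ to obtain the $H^3$-bound, using $\rho\in H^2$ and the Sobolev multiplier inequalities that follow from \eqref{bound on L infty norm}--\eqref{embedding from Lp to H1}. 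The critical feature here is that the constant in \eqref{elliptic estimate on lame operator} is independent of $K$, so these bounds transfer the $K$-independence from the energy estimates to the full $\setb$-norm. Similarly, testing the time-differentiated equation with $\vu_t$ followed by a Lamé estimate applied to $\cL\vu_t$ gives the $L^2_t H^2_x$-bound on $\vu_t$.

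The Hölder estimate \eqref{holder estimates on u in space time} follows from the parabolic Sobolev embedding $H^1_t L^2_x\cap L^\infty_t H^2_x\hookrightarrow C^{1/2,1/4}$, while \eqref{continuity in time for u and ut} follows from integrating $\vu_t$ and $\vu_{tt}$ in time in $H^2$ and $L^2$ respectively, using Hölder in $t$. I expect the main obstacle to be the careful verification that \emph{every} boundary contribution generated during the $\vu_t$- and time-differentiated estimates carries the correct sign or is an exact time derivative of a nonnegative quantity with controlled initial value; any stray boundary term of indefinite sign with coefficient $\alpha=1/K$ would destroy $K$-independence. This is precisely where the structural compatibility between the Navier boundary condition, the Lamé operator, and the compatibility condition \eqref{compatibility condition on u0 no-slip} must be exploited, and it is the one point where the compressible case requires more care than the incompressible treatment in \cite{kelliher06,kim09}.
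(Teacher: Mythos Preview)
Your proposal follows essentially the same route as the paper: construct $\vu$ from a given $\rho$ by Galerkin/linear approximation, derive $K$-independent a~priori bounds by testing with $\vu$, $\vu_t$, and the time-differentiated equation with $\vu_t$, and then upgrade via the elliptic estimate \eqref{elliptic estimate on lame operator}. Your identification of the mechanism for $K$-independence---every boundary term produced by integration by parts against $\cL$ is either nonnegative with coefficient $K^{-1}$ or an exact time derivative of such a quantity---is exactly the point the paper emphasises.

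There is, however, a gap at the top of your hierarchy. The tests you describe (with $\vu$, with $\vu_t$, and the time-differentiated equation tested with $\vu_t$) yield only
\[
\vu\in L^\infty_tH^2_x,\qquad \vu_t\in L^\infty_tL^2_x\cap L^2_tH^1_x,
\]
and hence, via \eqref{elliptic estimate on lame operator} with $m=1$, only $\vu\in L^2_tH^3_x$. The definition of $\setb$ requires $\vu_t\in L^\infty_tH^1_x$ and $\vu\in L^\infty_tH^3_x$. Your sentence ``testing the time-differentiated equation with $\vu_t$ followed by a Lam\'e estimate applied to $\cL\vu_t$'' does not close this: applying \eqref{elliptic estimate on lame operator} to $\vu_t$ needs $\|\cL\vu_t\|_{L^2}$, and from the time-differentiated equation this involves $\|\rho\vu_{tt}\|_{L^2}$, which is not controlled by any of your tests. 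The paper's remedy is a fourth multiplier: one tests the time-differentiated momentum equation against $\cL\vu_t$ itself. The cross term $\intox\rho\vu_{tt}\cdot\Delta\vu_t$ is then integrated by parts in space (not in time) to produce $-\tfrac{d}{dt}\intox\tfrac12\rho|\nabla\vu_t|^2$ together with a boundary contribution $-K^{-1}\tfrac{d}{dt}\intoxb\rho|\vu_t|^2$ of the right sign, and simultaneously the term $-\intox|\cL\vu_t|^2$ appears on the left. This single estimate delivers both $\sup_t\|\nabla\vu_t\|_{L^2}^2$ and $\int_0^T\|\cL\vu_t\|_{L^2}^2$, after which \eqref{elliptic estimate on lame operator} with $m=1$ gives $\vu\in L^\infty_tH^3_x$ and $\vu_t\in L^2_tH^2_x$. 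Note that this step also throws off a boundary term $\intoxb\rho_t\,\vu_t\cdot\nabla\vu_t\,dS_x$ \emph{without} a $K^{-1}$ factor, which the paper bounds via the trace inequality \eqref{bound on bdry integral}; your proposal should anticipate this term as well, since it is precisely the kind of ``stray boundary term'' you warn about, though here it is harmless.
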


\begin{proof}
It suffices to obtain the bound \eqref{bound on u def} on $\vu$ as the estimates \eqref{holder estimates on u in space time}-\eqref{continuity in time for u and ut} follow by the same argument given in \cite[Lemma~3.4]{hoff11}. The proof consists of four separate energy-type estimates which will be carried out in subsequent steps. Most of the details are reminiscent of those given in \cite{hoff11} except we have to ensure that the constant $N$ is independent of $K$. Throughout this proof $N = N(M_0,M)$ will denote a generic positive constant as described in the statement of the lemma.

\noindent{\bf Step 1. Preliminary $L^2$ bound:} We multiply \eqref{Navier Stokes}$_2$ by $u^j$, apply the boundary condition \eqref{Navier condition} and sum over $j$ to obtain
\begin{align*}
&\frac{d}{dt}\intox\frac{1}{2}\rho|\vu|^2dx+\mu\intox|\nabla\vu|^2dx+K^{-1}\intoxb|\vu|^2dS_x\\
&\le N\intox(|\nabla P(\rho)||\vu|+|\rho_t+\divv(\rho\vu)||\vu|^2)dx.
\end{align*}
Since $K>0$, the boundary integral can be discarded from the left side, and hence we apply the bound \eqref{bound on rho def} on $\rho$ to conclude
\begin{align}\label{L2 bound on u}
\frac{d}{dt}\intox|\vu|^2dx+\intox|\nabla\vu|^2dx\le N(1+\|\vu(\cdot,t)\|^3_{H^1}).
\end{align}
\noindent{\bf Step 2. $H^1$ bound:} Next we multiply \eqref{Navier Stokes}$_2$ by $u^j_t$, apply the boundary condition \eqref{Navier condition} and sum over $j$ to obtain
\begin{align*}
&\frac{d}{dt}\Big[\intox\frac{1}{2}(\mu|\nabla\vu|^2+\lambda(\divv(\vu))^2dx+K^{-1}\intoxb\frac{1}{2}|\vu|^2dS_x\Big]+N^{-1}\intox|\vu_t|^2dx\\
&\le N\Big[1+\intox|\vu|^2|\nabla\vu|^2dx\Big].
\end{align*}
Using Lemma~\ref{estimate on lame operator lemma} and the embedding \eqref{embedding from Lp to H1}, we have
\begin{align}\label{L2 bound on Lu}
\intox|\cL\vu|^2dx\le N\left(\intox(|\vu_t|^2+|\rho|^2)+\intox|\vu|^2|\nabla\vu|^2\right),
\end{align}
and the last integral on the right of the above can be bounded by
\begin{align*}
\intox|\vu|^2|\nabla\vu|^2dx&\le N(\|\vu|^4_{H^1}+\|\vu\|^3_{H^1}\|\cL\vu\|_{L^2})\\
&\le N\left(\|\vu|^4_{H^1}+\|\vu\|^3_{H^1}\left(\intox(|\vu_t|^2+|\rho|^2)+\intox|\vu|^2|\nabla\vu|^2\right)^\frac{1}{2}\right),
\end{align*}
which gives
\begin{align*}
\intox|\vu|^2|\nabla\vu|^2dx\le N(1+\|\vu\|^6_{H^1}+\|\vu\|^3_{H^1}\|\vu_t\|_{L^2}).
\end{align*}
Hence there is a positive time $T_2=T_2(M_0,M)$ which is independent of $K$ such that $\vu$ satisfies
\begin{align}\label{H1 bound on u}
\sup_{0\le t\le\min\{T_2,T\}}\|\vu(\cdot,t)\|^2_{H^1}+\int_0^{\min\{T_2,T\}}\intox|\vu_t|^2dxdt\le N.
\end{align}
\noindent{\bf Step 3. $H^2$ bound:} We then multiply \eqref{Navier Stokes}$_2$ by $u^j_{tt}$, apply the boundary condition \eqref{Navier condition} and sum over $j$ to obtain
\begin{align*}
\frac{d}{dt}\intox\frac{1}{2}\rho|\vu_t|^2dx+N^{-1}\intox|\nabla\vu_t|^2dx+K^{-1}\intoxb|\vu_t|^2dS_x\le N(1+\|\vu_t\|^4_{L^2}).
\end{align*}
Again since $K>0$, the boundary integral as appeared above can be discarded. It follows that for a new time $T_2 = T_2(M_0,M)$, 
\begin{align}\label{L2 bound on ut}
\sup_{0\le t\le \min\{T_2,T\}}\intox|\vu_t|^2dx+\int_0^{\min\{T_2,T\}}\intox|\nabla\vu_t|^2dxdt\le N.
\end{align}
Furthermore, by applying the bounds \eqref{H1 bound on u} and \eqref{L2 bound on ut} on \eqref{L2 bound on Lu}, we can see that $\|\cL\vu\|_{L^2}$ can be bounded by $N$, which implies that
\begin{align}\label{H2 bound on u}
\sup_{0\le t\le\min\{T_2,T\}}\|\vu(\cdot,t)\|_{H^2}\le N.
\end{align}
\noindent{\bf Step 4. $H^3$ bound:} Finally, we multiply \eqref{Navier Stokes}$_2$ by $\cL u^j_t$ and sum over $j$ to obtain
\begin{align}\label{H3 bound step 1}
\intox\{\rho(\vu_t+\nabla\vu\,\vu)+\nabla P(\rho)-\cL\vu\}_t\cdot\cL\vu_t dx=0.
\end{align}
We compute the term $\intox\rho\vu_{tt}\cdot\Delta\vu_t$ as appeared in \eqref{H3 bound step 1}. Using the boundary condition \eqref{Navier condition},
\begin{align*}
\intox\rho\vu_{tt}\cdot\Delta\vu_t&=-\frac{d}{dt}\intox\frac{1}{2}\rho|\nabla\vu_t|^2dx+\intox\frac{1}{2}\rho_t|\nabla\vu_t|^2dx\\
&\qquad-K^{-1}\frac{d}{dt}\intoxb\rho|\vu_t|^2dS_x-\intoxb\frac{1}{2}\rho_t\vu_t\cdot\nabla\vu_tdS_x,
\end{align*}
and by applying \eqref{bound on bdry integral}, the term $-\intoxb\frac{1}{2}\rho_t\vu_t\cdot\nabla\vu_tdS_x$ can be bounded by
\begin{align*}
&\left|\intoxb\frac{1}{2}\rho_t\vu_t\cdot\nabla\vu_tdS_x\right|\\
&\le\frac{1}{2}\left(\intoxb|\rho_t\vu_t|^2\right)^\frac{1}{2}\left(\intoxb|\nabla\vu_t|^2\right)^\frac{1}{2}\\
&\le N\|\vu_t\|_{H^2}\left\{\Big(\intox|\rho_t\vu_t|^2\Big)^\frac{1}{2}+\Big(\intox|\rho_t\vu_t|^2\Big)^\frac{1}{4}\Big(\intox|\nabla\rho_t|^2|\vu_t|^2\Big)^\frac{1}{4}\right\}\\
&\qquad+N\|\vu_t\|_{H^2}\left\{\Big(\intox|\rho_t\vu_t|^2\Big)^\frac{1}{4}\Big(\intox|\rho_t|^2|\nabla\vu_t|^2\Big)^\frac{1}{4}\right\}.
\end{align*}
Hence we obtain
\begin{align}\label{H3 bound step 2}
\intox\rho\vu_{tt}\cdot\Delta\vu_t&\le-\frac{d}{dt}\left(\intox\frac{1}{2}\rho|\nabla\vu_t|^2dx+K^{-1}\intoxb\rho|\vu_t|^2dS_x\right)\notag\\
&\qquad+N\|\vu_t\|_{H^2}\left\{\Big(\intox|\rho_t\vu_t|^2\Big)^\frac{1}{2}+\Big(\intox|\rho_t\vu_t|^2\Big)^\frac{1}{4}\Big(\intox|\nabla\rho_t|^2|\vu_t|^2\Big)^\frac{1}{4}\right\}\notag\\
&\qquad+N\|\vu_t\|_{H^2}\left\{\Big(\intox|\rho_t\vu_t|^2\Big)^\frac{1}{4}\Big(\intox|\rho_t|^2|\nabla\vu_t|^2\Big)^\frac{1}{4}\right\}.
\end{align}
Notice that the since $K^{-1}\intoxb\rho|\vu_t|^2dS_x\ge0$, it can be dropped off from the analysis after we integrate over time. The other terms in \eqref{H3 bound step 1} can be treated in a similar way as we did before so that by applying \eqref{H3 bound step 2} on \eqref{H3 bound step 1}, integrating over time, using the bounds available for $\rho\in\seta$ and performing a long but straightforward sequence of estimates, we conclude that
\begin{align}\label{H3 bound step 3}
\sup_{0\le t\le\min\{T_2,T\}}\intox|\nabla\vu_t|^2dx+\int_0^{\min\{T_2,T\}}\intox|\cL\vu_t|^2dxdt\le N.
\end{align}
Together with \eqref{H2 bound on u} and the result \eqref{elliptic estimate on lame operator} obtained in Lemma~\ref{estimate on lame operator lemma}, we have
\begin{align}\label{H3 bound on u}
\sup_{0\le t\le T}(\|\vu(\cdot,t)\|^2_{H^3},\|\vu_t(\cdot,t)\|^2_{H^1})+\int_0^{\min\{T_2,T\}}\|\vu_t(\cdot,t)\|^2_{H^2}\le N,
\end{align}
which implies $\vu\in\tilde\sum_2^{N,\min\{T_2,T\}}$ as claimed.
\end{proof}

\begin{proof}[Proof of Theorem~\ref{local-in-time existence theorem}]
As mentioned before, we only consider the system (NSENC), which is the Navier-Stokes equations \eqref{Navier Stokes} with the Navier boundary condition \eqref{Navier condition}. With the help of the estimates obtained in Theorem~\ref{estimate on rho theorem} and Lemma~\ref{estimate on u lemma}, Theorem~\ref{local-in-time existence theorem} can now be proved by the method given in \cite{hoff11} and we only give a brief outline of its proof. First of all, by Theorem~\ref{estimate on rho theorem}, given a suitable velocity $\vu$, there is a corresponding density $\rho$ so that $(\rho,\vu)$ satisfies the mass equation \eqref{Navier Stokes}$_1$. Next by reversing the role of $\rho$ and $\vu$, we show that given suitably chosen $\rho$, there is a velocity $\vu$ which satisfies the momentum equation \eqref{Navier condition}$_2$. The key idea for constructing $\vu$ from $\rho$ is to apply Galerkin approximation to \eqref{Navier condition}$_2$, more precisely, given a suitable density $\rho$, if $V^n$ is the span of the first eigenfunctions of the Lam\'{e} operator $\cL$, then we can construct an approximate velocity $\vu^n:[0,T]\to V^n$ with some prepared initial data $\vu_0^n$. Lemma~\ref{estimate on u lemma} then applies to show that $\vu^n$ satisfies the bounds \eqref{bound on u def} and estimates \eqref{holder estimates on u in space time}-\eqref{continuity in time for u and ut} which are all independent of $n$. By taking $n\to\infty$, we obtain the velocity $\vu$ for the given $\rho$ and that the required bounds are retained in the limit. The argument will then be completed by combining the above constructions in an iterative process $\vu^{(k)}\mapsto\rho^{(k)}\mapsto\vu^{(k+1)}$. 
\end{proof}

\section{Convergence of smooth solutions: Proof of Theorem~\ref{main thm}}\label{convergence section}

In this section, we give the proof of Theorem~\ref{main thm} which will be carried out in a sequence of lemmas. We make use of the $K$-independent bounds obtained in Theorem~\ref{local-in-time existence theorem} in controlling both the densities and velocities. To begin with, suppose $(\trho,\tu)$ and $(\rho,\vu)$ are smooth classical solutions to the system \eqref{Navier Stokes} which are defined on $\Omega \times [0, T]$ with boundary conditions \eqref{Navier condition} and \eqref{no slip condition} respectively satisfying the bounds \eqref{boundedness on rho local-in-time solution}-\eqref{boundedness on u local-in-time solution}, and assume that $(\trho,\tu)$ and $(\rho,\vu)$ are having the same initial data $(\rho_0,\vu_0)$ which satisfy \eqref{boundedness condition on initial data}-\eqref{compatibility condition on u0 no-slip}. Define
\begin{align*}
\vw:=\vu-\tu,\qquad\phi:=\rho-\trho.
\end{align*}
Then we have $\vw\Big|_{t=0}=0$ and $\phi\Big|_{t=0}=0$. Furthermore, for all $t\in[0,T]$, $\vw$ and $\phi$ satisfy the following integral equations respectively:
\begin{align}\label{identity for phi}
\frac{1}{2}\intox|\phi|^2dx+\intoxt\phi\divv(\rho \vu-\trho\tu)dxds=0.
\end{align}
and
\begin{align}\label{energy equality for w}
&\intoxt(\rho \vu-\trho\tu)_t\cdot \vw dxds+\intoxt\divv(\rho \vu\otimes \vu-\trho\tu\otimes\tu)\cdot \vw dxds\notag\\
&\qquad+\intoxt(\nabla P(\rho)-\nabla P(\trho))\cdot \vw dxds+\mu\intox|\nabla \vw|^2+\lambda\intox(\divv (\vw))^2dxds\notag\\
&=-\intoxtb(K^{-1}\vu-\No\cdot\nabla\tu)\cdot \vw dS_xds.
\end{align}
With the help of those $K$-independent bounds obtained in Theorem~\ref{local-in-time existence theorem}, we can bound $\phi$ and $\vw$ which will be given in subsequent lemmas. 

We first prove the following lemma which gives an estimate on $\phi$:
\begin{lemma}\label{estimate on phi lemma}
For all $t\in[0,T]$, we have
\begin{align}\label{estimate on phi}
\intox\phi^2(x,t)dx&\le C\Big(\intoxtb|\vu|^2dS_xds\Big)^\frac{1}{2}+C\intoxt|\phi|^2dxds\notag\\
&\qquad+C\Big(\intoxt|\phi|^2dxds\Big)^\frac{1}{2}\Big(\intoxt|\nabla \vw|^2dxds\Big)^\frac{1}{2}\notag\\
&\qquad+C\Big(\intoxt|\phi|^2dxds\Big)^\frac{1}{2}\Big(\intoxt|\vw|^2dxds\Big)^\frac{1}{2},
\end{align}
where $C$ is a positive constant which only depends on $T$ and on $M_0$, $M_0'$ and $N$ as described in Theorem~\ref{local-in-time existence theorem}, and is independent of $K$.
\end{lemma}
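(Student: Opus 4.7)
Starting from the integral identity \eqref{identity for phi}, the plan is to use the decomposition
$$\rho\vu-\trho\tu \;=\; \phi\,\vu+\trho\,\vw,$$
so that $\divv(\rho\vu-\trho\tu)=\nabla\phi\cdot\vu+\phi\,\divv\vu+\trho\,\divv\vw+\nabla\trho\cdot\vw$. Substituting into \eqref{identity for phi} produces four space--time integrals, and only one of them contains $\nabla\phi$. I would remove that derivative by writing $\phi\,\nabla\phi\cdot\vu=\tfrac12\vu\cdot\nabla(\phi^{2})$ and integrating by parts:
$$\intoxt\phi\,\nabla\phi\cdot\vu\,dxds \;=\; -\tfrac12\intoxt \phi^{2}\divv\vu\,dxds+\tfrac12\intoxtb \phi^{2}\,\vu\cdot\No\,dS_{x}ds.$$
Rather than invoking the Navier identity $\vu\cdot\No=0$ sharply, I would bound the surface integral by Cauchy--Schwarz:
$$\Big|\intoxtb \phi^{2}\,\vu\cdot\No\,dS_{x}ds\Big|\le \|\phi\|_{L^{\infty}([0,T];L^{4}(\partial\Omega))}^{2}\Big(\intoxtb|\vu|^{2}dS_{x}ds\Big)^{1/2},$$
after which the trace inequality together with the embedding $H^{2}(\Omega)\hookrightarrow L^{4}(\partial\Omega)$ and the uniform $H^{2}$-bound on both $\rho$ and $\trho$ supplied by Theorem~\ref{local-in-time existence theorem} absorbs the $L^{4}(\partial\Omega)$-factor into a $K$-independent constant $C$, yielding the first term on the right of \eqref{estimate on phi}.

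For the three remaining interior integrals I would apply Cauchy--Schwarz directly. The embedding \eqref{bound on L infty norm} combined with the $K$-independent bounds of Theorem~\ref{local-in-time existence theorem} gives $\|\divv\vu\|_{L^{\infty}}\le C\|\vu\|_{H^{3}}\le C$, $\|\trho\|_{L^{\infty}}\le C$ and $\|\nabla\trho\|_{L^{\infty}}\le C\|\trho\|_{H^{2}}\le C$, and therefore
\begin{align*}
\Big|\intoxt \phi^{2}\divv\vu\,dxds\Big| &\le C\intoxt|\phi|^{2}dxds,\\
\Big|\intoxt \phi\,\trho\,\divv\vw\,dxds\Big| &\le C\Big(\intoxt|\phi|^{2}dxds\Big)^{1/2}\Big(\intoxt|\nabla\vw|^{2}dxds\Big)^{1/2},\\
\Big|\intoxt \phi\,\nabla\trho\cdot\vw\,dxds\Big| &\le C\Big(\intoxt|\phi|^{2}dxds\Big)^{1/2}\Big(\intoxt|\vw|^{2}dxds\Big)^{1/2}.
\end{align*}
Adding these contributions to the boundary term yields \eqref{estimate on phi}.

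The computations themselves are routine; the one conceptual point requiring care is to track that every constant appearing on the right is independent of $K$. This is precisely the content of Theorem~\ref{local-in-time existence theorem}, which supplies the uniform bounds on $(\rho,\vu)$ and $(\trho,\tu)$ needed throughout. No Gronwall iteration is performed at this stage: the bound \eqref{estimate on phi} will next be coupled with a companion $\vw$-estimate derived from \eqref{energy equality for w}, in which the boundary integral $\intoxtb|\vu|^{2}dS_{x}ds$ reappears through the $K^{-1}$ friction term and can eventually be absorbed, and only then will the Gronwall argument underlying Theorem~\ref{main thm} close.
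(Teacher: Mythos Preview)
Your decomposition and integration-by-parts step are exactly those of the paper, and the overall structure is correct. There is, however, one genuine slip: the claim $\|\nabla\trho\|_{L^{\infty}}\le C\|\trho\|_{H^{2}}$ is false in three dimensions. The embedding \eqref{bound on L infty norm} gives $\|\varphi\|_{L^\infty}\le C\|\varphi\|_{H^2}$, so applying it with $\varphi=\nabla\trho$ would require control of $\|\trho\|_{H^3}$, which Theorem~\ref{local-in-time existence theorem} does not provide. In $\R^3$ one only has $H^2\hookrightarrow W^{1,6}$, not $W^{1,\infty}$. The paper repairs this by pairing $\nabla\trho$ and $\vw$ in $L^4$ via \eqref{embedding from Lp to H1}:
\[
\int_0^t\|\nabla\trho\|_{L^4}\|\vw\|_{L^4}\|\phi\|_{L^2}\,ds
\le C\Big(\intoxt|\phi|^2\Big)^{1/2}\Big[\Big(\intoxt|\vw|^2\Big)^{1/2}+\Big(\intoxt|\nabla\vw|^2\Big)^{1/2}\Big],
\]
which uses only $\trho\in H^2$ and $\vw\in H^1$. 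With this modification your argument goes through.

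A minor remark on the boundary term: since $\vu\cdot\No=0$ holds \emph{exactly} on $\partial\Omega$ by the Navier condition \eqref{Navier condition}, the surface integral $\intoxtb\phi^2\,\vu\cdot\No\,dS_x ds$ simply vanishes, and this is what the paper does. Your Cauchy--Schwarz/trace bound is valid and does produce the term $C\big(\intoxtb|\vu|^2dS_xds\big)^{1/2}$ that appears in the statement, but that term is in fact superfluous in \eqref{estimate on phi}; the paper's proof never generates it.
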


\begin{proof}
In view of \eqref{identity for phi}, since $\rho \vu-\trho\tu=\phi \vu+\trho \vw$, we can decompose the integral $\dis\intoxt\phi\divv(\rho \vu-\trho\tu)dxds$ as follows:
\begin{align*}
\intoxt\phi\divv(\rho \vu-\trho\tu)dxds=\intoxt\phi\divv(\phi \vu+\trho \vw)dxds:=I_1+I_2,
\end{align*}
where 
\begin{align*}
I_1:=\intoxt\phi\divv(\phi \vu)dxds,\qquad I_2:=\intoxt\phi\divv(\trho \vw)dxds.
\end{align*}
To estimate $I_1$, we notice that
\begin{align*}
\intoxt\phi\divv(\phi \vu)=\intoxt\frac{1}{2}\divv(\phi^2 \vu)+\intoxt\frac{1}{2}\phi^2\divv(\vu),
\end{align*}
and therefore by the bounds \eqref{boundedness on u local-in-time solution} and \eqref{bound on L infty norm} and the boundary condition \eqref{Navier condition}, 
\begin{align}\label{bound on I1}
I_1&=\frac{1}{2}\intoxtb\phi^2 \No\cdot \vu dS_xds+\frac{1}{2}\intoxt\phi^2\divv(\vu)dxds\notag\\
&\le \frac{1}{2}\|\divv(\vu)\|_{L^\infty}\intoxt|\phi|^2dxds\le C\intoxt|\phi|^2dxds.
\end{align}
For the term $I_2$, using the bounds \eqref{boundedness on rho local-in-time solution}-\eqref{boundedness on u local-in-time solution} and \eqref{bound on L infty norm}, we readily have
\begin{align*}
I_2&\le C\intoxt|\trho||\nabla \vw||\phi|dxds+C\intoxt|\nabla\trho||\vw||\phi|dxds\notag\\
&\le C\|\trho\|_{L^\infty}\intoxt|\nabla \vw||\phi|dxds+C\int_0^t\|\nabla\trho\|_{L^4}\|\vw\|_{L^4}\|\phi\|_{L^2}dxds.
\end{align*}
Using the bound \eqref{boundedness on rho local-in-time solution} on $\trho$, the term $C\|\trho\|_{L^\infty}\intoxt|\nabla \vw||\phi|dxds$ can be bounded by $$C\Big(\intoxt|\phi|^2dxds\Big)^\frac{1}{2}\Big(\intoxt|\nabla \vw|^2dxds\Big)^\frac{1}{2},$$ and for $C\int_0^t\|\nabla\trho\|_{L^4}\|\vw\|_{L^4}\|\phi\|_{L^2}dxds$, we apply the embedding \eqref{embedding from Lp to H1} and the bound \eqref{boundedness on rho local-in-time solution} to obtain
\begin{align*}
&C\int_0^t\|\nabla\trho\|_{L^4}\|\vw\|_{L^4}\|\phi\|_{L^2}dxds\\
&\le C\int_0^t(\|\nabla\trho\|_{L^2}+\|\nabla\trho\|_{L^2}^\frac{1}{4}\|\nabla^2\trho\|_{L^2}^\frac{1}{4})(\|\vw\|_{L^2}+\|\vw\|_{L^2}^\frac{3}{4}\|\nabla\vw\|_{L^2}^\frac{1}{4})\|\phi\|_{L^2}\\
&\le C\Big(\intoxt|\phi|^2dxds\Big)^\frac{1}{2}\Big(\intoxt|\nabla \vw|^2dxds\Big)^\frac{1}{2}\notag\\
&\qquad+C\Big(\intoxt|\phi|^2dxds\Big)^\frac{1}{2}\Big(\intoxt|\vw|^2dxds\Big)^\frac{1}{2}.
\end{align*}
Hence we have
\begin{align}\label{bound on I2}
I_2&\le C\Big(\intoxt|\phi|^2dxds\Big)^\frac{1}{2}\Big(\intoxt|\nabla \vw|^2dxds\Big)^\frac{1}{2}\notag\\
&\qquad+C\Big(\intoxt|\phi|^2dxds\Big)^\frac{1}{2}\Big(\intoxt|\vw|^2dxds\Big)^\frac{1}{2}.
\end{align}
We apply the bounds \eqref{bound on I1} and \eqref{bound on I2} on \eqref{identity for phi}, and the assertion \eqref{estimate on phi} follows.
\end{proof}
Next we prove the following lemma which consists of the estimate on $\vw$:
\begin{lemma}\label{estimate on w lemma}
For all $t\in[0,T]$, we have
\begin{align}\label{estimate on w}
&\intox\rho|\vw(x,t)|^2dx+\intoxt|\nabla \vw|^2dxds\notag\\
&\le C\intoxt|\vw|^2dxds+C\Big(\intoxt|\vw|^2dxds\Big)^\frac{1}{2}\Big(\intoxt|\nabla \vw|^2dxds\Big)^\frac{1}{2}\notag\\
&\,\,\,+C\Big(\intoxt|\nabla \vw|^2dxds\Big)^\frac{1}{2}\Big(\intoxt|\phi|^2dxds\Big)^\frac{1}{2}\notag\\
&\,\,\,+C(\|\tu_t\|_{H^2}+1)\Big(\intoxt|\vw|^2dxds\Big)^\frac{1}{2}\Big(\intoxt|\phi|^2dxds\Big)^\frac{1}{2}\notag\\
&\,\,\,+C\intoxtb|\vu|^2dS_xds+C\Big(\intoxtb|\vu|^2dS_xds\Big)^\frac{1}{2},
\end{align}
where $C$ is a positive constant which only depends on $T$ and on $M_0$, $M_0'$ and $N$ as described in Theorem~\ref{local-in-time existence theorem}, and is independent of $K$.
\end{lemma}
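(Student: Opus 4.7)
I would start from the weak identity \eqref{energy equality for w} and unpack it term by term, converting the first two integrals into a clean kinetic-energy via mass conservation, handling the remaining terms by Cauchy-Schwarz together with the $K$-independent bounds furnished by Theorem~\ref{local-in-time existence theorem}, and crucially exploiting the correct sign of the $K^{-1}$ contribution on $\partial\Omega$ so that it can simply be discarded. This final sign point is precisely what keeps the constant $C$ independent of $K$.

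\textbf{The momentum terms.} For smooth solutions of the mass equation $\rho_t+\divv(\rho\vu)=0$ one has $(\rho\vu)_t+\divv(\rho\vu\otimes\vu)=\rho\dot\vu$ with $\dot\vu:=\vu_t+\vu\cdot\nabla\vu$, and analogously for $(\trho,\tu)$. Writing $\rho\dot\vu-\trho\dot\tu=\rho(\dot\vu-\dot\tu)+\phi\,\dot\tu$ and expanding $\dot\vu-\dot\tu=\vw_t+\vu\cdot\nabla\vw+\vw\cdot\nabla\tu$, I would multiply by $\vw$, integrate on $\Omega\times[0,t]$, use $\vw|_{t=0}=0$ together with the mass law once more, and dispose of the boundary flux through $\vu\cdot\No=0$. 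The first grouping then collapses to $\tfrac{1}{2}\intox\rho|\vw|^2(x,t)\,dx$. What remains is $\intoxt\rho(\vw\cdot\nabla\tu)\cdot\vw$, controlled by $C\intoxt|\vw|^2$ using $\|\nabla\tu\|_{L^\infty}\le C\|\tu\|_{H^3}\le C$, and $\intoxt\phi\,\dot\tu\cdot\vw$, which by Cauchy-Schwarz and $\|\dot\tu\|_{L^\infty}\le C(\|\tu_t\|_{H^2}+1)$ (from \eqref{bound on L infty norm} applied to $\tu_t+\tu\cdot\nabla\tu$) yields precisely the $(\|\tu_t\|_{H^2}+1)$-weighted term in \eqref{estimate on w}.

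\textbf{Pressure, viscosity, and boundary.} Since $P\in C^2((0,\infty))$ and $\rho,\trho$ are uniformly bounded above and away from $0$, the mean value theorem gives $|P(\rho)-P(\trho)|\le C|\phi|$. Integrating $\intoxt(\nabla P(\rho)-\nabla P(\trho))\cdot\vw$ by parts, the surface contribution vanishes because $\vw=\vu$ and $\vu\cdot\No=0$ on $\partial\Omega$, and what remains is bounded by $C(\intoxt|\phi|^2)^{1/2}(\intoxt|\nabla\vw|^2)^{1/2}$. The viscous contribution $\mu\intoxt|\nabla\vw|^2+\lambda\intoxt(\divv\vw)^2$ on the left satisfies the pointwise coercive bound $\ge c|\nabla\vw|^2$ under \eqref{conditions on viscosity} (if $\lambda<0$ one uses $(\divv\vw)^2\le 3|\nabla\vw|^2$). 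Finally, on $\partial\Omega$ we have $\tu=0$ and thus $\vw=\vu$, so the right-hand side of \eqref{energy equality for w} reads
\[
-K^{-1}\intoxtb|\vu|^2\,dS_xds+\intoxtb(\No\cdot\nabla\tu)\cdot\vu\,dS_xds.
\]
The first piece is $\le 0$ and I would simply drop it; this is the step that secures $K$-independence. For the second, the trace inequality \eqref{bound on bdry integral} together with $\|\tu\|_{H^3}\le C$ gives $\|\No\cdot\nabla\tu\|_{L^2(\partial\Omega)}\le C$, so Cauchy-Schwarz in space combined with a Young-type split in time produces both the $C\intoxtb|\vu|^2$ and $C(\intoxtb|\vu|^2)^{1/2}$ contributions.

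\textbf{Main obstacle.} The essential subtlety is to ensure no factor of $K^{-1}$ survives in the constants, since a single such factor would destroy the passage $K\to 0$ afterwards. Three ingredients conspire to prevent this: (i) the absorbing sign of the Navier boundary contribution that allows us to discard the $-K^{-1}\intoxtb|\vu|^2$ term, (ii) the $K$-free coercivity afforded by \eqref{conditions on viscosity}, and (iii) the $K$-independent Sobolev bounds on $(\rho,\vu)$ and $(\trho,\tu)$ supplied by Theorem~\ref{local-in-time existence theorem}. Assembling the estimates above and absorbing the kinetic and viscous pieces on the left of \eqref{energy equality for w} yields \eqref{estimate on w}.
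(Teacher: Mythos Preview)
Your proof is correct and follows the same overall skeleton as the paper (start from \eqref{energy equality for w}, treat pressure via integration by parts, drop the signed $-K^{-1}\intoxtb|\vu|^2$ boundary contribution, and bound the $\nabla\tu$ surface term by the trace inequality). The only genuine difference is in how you organize the momentum and convection pieces $I_3+I_4$. The paper splits $(\rho\vu-\trho\tu)_t=(\rho\vw)_t+(\phi\tu)_t$ and handles the convection term $I_4$ separately by integration by parts, which spawns the auxiliary terms $I_{3,1},I_{3,2},I_{3,3}$ and in particular the cross-contributions $\big(\intoxt|\vw|^2\big)^{1/2}\big(\intoxt|\nabla\vw|^2\big)^{1/2}$ and $\big(\intoxt|\nabla\vw|^2\big)^{1/2}\big(\intoxt|\phi|^2\big)^{1/2}$ that appear in the statement. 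You instead combine $I_3+I_4$ into material-derivative form $\rho\dot\vu-\trho\dot{\tu}=\rho(\vw_t+\vu\cdot\nabla\vw+\vw\cdot\nabla\tu)+\phi\,\dot{\tu}$, so that the first two summands collapse exactly to $\tfrac12\intox\rho|\vw|^2$ by mass conservation and $\vu\cdot\No=0$, leaving only $C\intoxt|\vw|^2$ from $\vw\cdot\nabla\tu$ and the $(\|\tu_t\|_{H^2}+1)$--weighted $\phi$--$\vw$ cross term. This is the standard relative-energy bookkeeping and is slightly more economical than the paper's route: you never need the extra $\nabla\vw$ cross-terms coming from $I_{3,1},I_{3,2},I_4$ (the $\nabla\vw$--$\phi$ cross term in \eqref{estimate on w} still arises, from the pressure). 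Either way the inequality \eqref{estimate on w} holds with a $K$-independent constant.
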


\begin{proof}
In view of \eqref{energy equality for w}, we first estimate the left side of \eqref{energy equality for w}. Define
\begin{align*}
I_3&:=\intoxt(\rho \vu-\trho\tu)_t\cdot w dxds,\qquad I_4:=\intoxt\divv(\rho \vu\otimes \vu-\trho\tu\otimes\tu)\cdot \vw dxds\\
I_5&:=\intoxt(\nabla P(\rho)-\nabla P(\trho))\cdot \vw dxds,\qquad I_6:=\mu\intox|\nabla \vw|^2+\lambda\intox(\divv (\vw))^2dxds.
\end{align*}
To estimate $I_3$, we note that
\begin{align*}
&\intoxt(\rho \vu-\trho\tu)_t\cdot \vw dxds\\
&=\intoxt\frac{1}{2}(\rho|\vw|^2)_tdxds+\intoxt\frac{1}{2}\rho_t|\vw|^2dxds+\intoxt(\phi\tu)_t\cdot \vw dxds,
\end{align*}
hence
\begin{align}\label{eqn for I3}
I_3=\frac{1}{2}\intox\rho|\vw|^2dx+I_{3,1}+I_{3,2}+I_{3,3}
\end{align}
where
\begin{align*}
I_{3,1}:=\intoxt\frac{1}{2}\rho_t|\vw|^2dxds,\,\,\, I_{3,2}:=\intoxt\phi_t\tu\cdot \vw dxds,\,\,\,I_{3,3}:=\intoxt\phi\tu_t\cdot \vw dxds.
\end{align*}
The term $I_{3,3}$ is readily bounded by
\begin{align*}
|I_{3,3}|\le C\|\tu_t\|_{H^2}\Big(\intoxt|\vw|^2dxds\Big)^\frac{1}{2}\Big(\intoxt|\phi|^2dxds\Big)^\frac{1}{2}.
\end{align*}
For $I_{3,1}$, we use the mass equation \eqref{Navier Stokes}$_1$ for $(\rho,\vu)$ and notice that $\vw=\vu$ and $\No\cdot \vu=0$ on $\partial\Omega$, we have, by the bounds \eqref{boundedness on rho local-in-time solution}-\eqref{boundedness on u local-in-time solution} and the estimate \eqref{bound on L infty norm} on $\|\rho \vu\|_{L^\infty}$ that
\begin{align*}
|I_{3,1}|&\le C\intoxt|\rho \vu||\nabla \vw||\vw|dxds\\
&\le C\|\rho \vu\|_{L^\infty}\Big(\intoxt|\vw|^2dxds\Big)^\frac{1}{2}\Big(\intoxt|\nabla \vw|^2dxds\Big)^\frac{1}{2}\\
&\le C\Big(\intoxt|\vw|^2dxds\Big)^\frac{1}{2}\Big(\intoxt|\nabla \vw|^2dxds\Big)^\frac{1}{2},
\end{align*}
while for the term $I_{3,2}$, we notice that 
\begin{align*}
&\intoxt\phi_t\tu\cdot \vw dxds\\
&=-\intoxt\divv(\rho \vu-\trho\tu)\tu\cdot \vw dxds=-\intoxt\divv(\phi \vu+\trho \vw)\tu\cdot \vw dxds
\end{align*}
and recall the fact that $\tu=0$ on $\partial\Omega$ to obtain
\begin{align*}
|I_{3,2}|&=\Big|\intoxt\divv(\rho \vu-\trho\tu)\tu\cdot \vw dxds\Big|= \Big|\intoxt(\phi \vu+\trho \vw)\cdot\nabla(\tu\cdot \vw)dxds\Big|,
\end{align*}
which gives
\begin{align*}
|I_{3,2}|&\le C\intoxt|\vw|^2dxds+C\Big(\intoxt|\vw|^2dxds\Big)^\frac{1}{2}\Big(\intoxt|\nabla \vw|^2dxds\Big)^\frac{1}{2}\\
&\qquad+C\Big(\intoxt|\vw|^2dxds\Big)^\frac{1}{2}\Big(\intoxt|\phi|^2dxds\Big)^\frac{1}{2}\\
&\qquad+C\Big(\intoxt|\nabla \vw|^2dxds\Big)^\frac{1}{2}\Big(\intoxt|\phi|^2dxds\Big)^\frac{1}{2}.
\end{align*}
Hence we can bound $I_{3,1}+I_{3,2}+I_{3,3}$ by
\begin{align}\label{bound on I3s}
&|I_{3,1}+I_{3,2}+I_{3,3}|\notag\\
&\le C\intoxt|\vw|^2dxds+C\Big(\intoxt|\vw|^2dxds\Big)^\frac{1}{2}\Big(\intoxt|\nabla \vw|^2dxds\Big)^\frac{1}{2}\notag\\
&\,\,\,+C\Big(\intoxt|\nabla \vw|^2dxds\Big)^\frac{1}{2}\Big(\intoxt|\phi|^2dxds\Big)^\frac{1}{2}\notag\\
&\,\,\,+C(\|\tu_t\|_{H^2}+1)\Big(\intoxt|\vw|^2dxds\Big)^\frac{1}{2}\Big(\intoxt|\phi|^2dxds\Big)^\frac{1}{2}.
\end{align}
Next we consider $I_4$. Upon integrating by parts, using the boundary condition that $\No\cdot \vu=0$ on $\partial \Omega$ and applying bounds \eqref{boundedness on rho local-in-time solution}-\eqref{boundedness on u local-in-time solution}, it can be estimated as follows (summation over repeated indexes is understood):
\begin{align}\label{bound on I4}
|I_4|&\le \Big|\intoxt\nabla w^j\cdot(\rho u^j \vu-\trho\tu^j \tu)dxds\Big|\notag\\
&\le C\Big(\intoxt|\nabla \vw|^2dxds\Big)^\frac{1}{2}\Big(\intoxt|\phi|^2dxds\Big)^\frac{1}{2}\notag\\
&\qquad+C\Big(\intoxt|\nabla \vw|^2dxds\Big)^\frac{1}{2}\Big(\intoxt|\vw|^2dxds\Big)^\frac{1}{2}.
\end{align}
For the term $I_5$, we again integrate by parts and use the boundary condition to obtain
\begin{align}\label{bound on I5}
|I_5|&\le\Big|\intoxtb \No\cdot \vu(P(\rho)-P(\trho))dS_xds\Big|+\Big|\intoxt(P(\rho)-P(\trho))\cdot\nabla \vw dxds\Big|\notag\\
&\le C\Big(\intoxt|\nabla \vw|^2dxds\Big)^\frac{1}{2}\Big(\intoxt|\rho-\trho|^2dxds\Big)^\frac{1}{2}.
\end{align}
Now we estimate the right side of \eqref{energy equality for w}. Recalling that $\vw=\vu$ on $\partial\Omega$ and applying the estimate \eqref{bound on bdry integral} on $\nabla \tu$ to get
\begin{align}\label{bound on bdry terms}
&-\mu\intoxtb(K^{-1}\vu-\No\cdot\nabla\tu)\cdot \vw dS_xds\notag\\
&=-\mu\intoxtb K^{-1}|\vu|^2dS_xds+\mu\intoxtb (\No\cdot\nabla\tu)\cdot \vu dS_xds\notag\\
&\le C\Big(\intoxtb|\nabla\tu|^2dS_xds\Big)^\frac{1}{2}\Big(\intoxtb|\vu|^2dS_xds\Big)^\frac{1}{2}\notag\\
&\le C\Big(\intoxtb\{|\nabla\tu|^2+|\nabla\tu||\Delta\tu|\}dS_xds\Big)^\frac{1}{2}\Big(\intoxtb|\vu|^2dS_xds\Big)^\frac{1}{2}\notag\\
&\le C\Big(\intoxtb|\vu|^2dS_xds\Big)^\frac{1}{2}.
\end{align}
Combining \eqref{eqn for I3}, \eqref{bound on I3s}, \eqref{bound on I4}, \eqref{bound on I5}, \eqref{bound on bdry terms} with \eqref{energy equality for w}, we conclude that
\begin{align*}
&\intox\rho|\vw|^2dx+\intoxt|\nabla \vw|^2dxds\\
&\le C\intoxt|\vw|^2dxds+C\Big(\intoxt|\vw|^2dxds\Big)^\frac{1}{2}\Big(\intoxt|\nabla \vw|^2dxds\Big)^\frac{1}{2}\notag\\
&\,\,\,+C(\|\tu_t\|_{H^2}+1)\Big(\intoxt|\vw|^2dxds\Big)^\frac{1}{2}\Big(\intoxt|\phi|^2dxds\Big)^\frac{1}{2}\notag\\
&\,\,\,+C\Big(\intoxt|\nabla \vw|^2dxds\Big)^\frac{1}{2}\Big(\intoxt|\phi|^2dxds\Big)^\frac{1}{2}+C\Big(\intoxtb|\vu|^2dS_xds\Big)^\frac{1}{2}
\end{align*}
and the estimate \eqref{estimate on w lemma} follows.
\end{proof}
The following lemma contains the crucial bound on $\|\vu\|_{L^2(\partial\Omega)}$ in terms of $K$ which will be used for proving Theorem~\ref{main thm}.
\begin{lemma}\label{estimate on u lemma}
There exists $T^*\in(0,T]$ such that for all $t\in[0,T^*]$, we have
\begin{align}\label{L2 bound on u on bdry}
\intoxtb |\vu|^2 dS_xds\le M_0K.
\end{align}
\end{lemma}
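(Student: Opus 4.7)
The plan is to exploit the Navier boundary condition directly: since $\No\cdot\vu=0$ on $\partial\Omega$, we have $|\vu|=|\vu\cdot\Ta|$ on $\partial\Omega$, and then the second part of \eqref{Navier condition} gives the pointwise identity $\vu\cdot\Ta = -K\mu(\nabla \vu\No)\cdot\Ta$ on $\partial\Omega$. Squaring yields
\[
|\vu(x,s)|^2 \;=\; K^2\mu^2\,|(\nabla\vu\,\No)\cdot\Ta|^2 \;\le\; K^2\mu^2\,|\nabla\vu(x,s)|^2,\qquad x\in\partial\Omega,\ s\in[0,T].
\]
This is the key observation: the $K$ on the right is a free small factor, and the remaining quantity is $\|\nabla\vu\|_{L^2(\partial\Omega)}^2$, which we already control independently of $K$.

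Next I would convert the boundary integral of $|\nabla\vu|^2$ into a bulk integral using the trace inequality \eqref{bound on bdry integral} applied to $f=\nabla\vu$:
\[
\int_{\partial\Omega}|\nabla\vu|^2\,dS_x \;\le\; C\int_{\Omega}\bigl(|\nabla\vu|^2+|\nabla\vu||\nabla^2\vu|\bigr)\,dx \;\le\; C\|\vu\|_{H^2}^2.
\]
The $K$-independent bound \eqref{boundedness on u local-in-time solution} from Theorem~\ref{local-in-time existence theorem} then gives $\|\vu(\cdot,s)\|_{H^2}^2\le N$ uniformly in $s\in[0,T]$ and in $K$. Combining, I obtain
\[
\int_0^t\int_{\partial\Omega}|\vu|^2\,dS_x\,ds \;\le\; C\mu^2 N\,K^2\,t
\]
for all $t\in[0,T]$, with $C$ depending only on $\Omega$, and with $C\mu^2 N$ independent of $K$.

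The final step is to choose the time $T^*$. Setting $T^*:=\min\{T,\ M_0/(C\mu^2 N)\}$, I get $C\mu^2 N\,T^* \le M_0$, and hence for $t\in[0,T^*]$,
\[
\int_0^t\int_{\partial\Omega}|\vu|^2\,dS_x\,ds \;\le\; M_0\,K^2.
\]
Since we ultimately care about the regime $K\to 0$, we may assume $K\le 1$ without loss of generality, so $K^2\le K$ and the desired bound $M_0 K$ follows. There is no real obstacle here: the only subtle point is to notice that the $K^2$ factor produced by the Navier condition is one power stronger than needed, which is precisely what gives a meaningful time $T^*$ independent of $K$. The estimate \eqref{L2 bound on u on bdry} is thus the quantitative counterpart of the heuristic that the Navier condition forces $\vu|_{\partial\Omega}=O(K)$ in the smooth-solution regime.
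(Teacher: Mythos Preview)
Your argument is correct and takes a genuinely different route from the paper's. The paper derives the basic energy identity for $(\rho,\vu)$ by multiplying the momentum equation by $\vu$; the Navier condition then produces the term $-K^{-1}\int_0^t\int_{\partial\Omega}|\vu|^2\,dS_x\,ds$ on the right side of \eqref{equality from momentum eqn}. After absorbing the remaining terms (using the $K$-independent bounds \eqref{boundedness on rho local-in-time solution}--\eqref{boundedness on u local-in-time solution} and taking $T^*$ small), the left side of \eqref{L2 estimate on rho and u} is nonnegative, forcing $K^{-1}\int_0^t\int_{\partial\Omega}|\vu|^2\,dS_x\,ds$ to be bounded by the initial data, hence by a constant of size $M_0$. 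By contrast, you bypass the energy identity entirely: you read off $|\vu|\le K\mu|\nabla\vu|$ pointwise on $\partial\Omega$ from the Navier condition, then control $\|\nabla\vu\|_{L^2(\partial\Omega)}$ by the trace estimate \eqref{bound on bdry integral} and the $K$-independent $H^2$ bound from Theorem~\ref{local-in-time existence theorem}. Your approach is more elementary and in fact yields the sharper decay $O(K^2)$; the only price is the harmless restriction $K\le 1$ needed to recover the stated form $M_0K$. (One small remark: in $\R^3$ the tangent space at each boundary point is two-dimensional, so the line $|\vu|=|\vu\cdot\Ta|$ should be read as ``$\vu$ equals its tangential component''; summing the Navier relation over an orthonormal tangent frame gives your pointwise bound.) The paper's energy-based derivation, on the other hand, works uniformly for all $K>0$ and is closer in spirit to the estimates used later in Section~\ref{convergence section}.
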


\begin{proof}
We multiply the momentum equation \eqref{Navier Stokes}$_2$ by $u^j$, sum over $j$ and integrate to obtain
\begin{align}\label{equality from momentum eqn}
&\intox\frac{\rho|\vu|^2}{2}dx+\intoxt \vu\cdot\nabla Pdxds+\intoxt\{\mu|\nabla \vu|^2+\lambda(\divv(\vu))^2\}dxds\notag\\
&=\intox\frac{\rho_0|\vu_0|^2}{2}dx+\intoxtb\mu(\nabla \vu\No)\cdot \vu dS_xds+\intoxtb\lambda\divv(\vu)\vu\cdot \No dS_xds\notag\\
&=\intox\frac{\rho_0|\vu_0|^2}{2}dx-\intoxtb K^{-1}|\vu|^2 dS_xds,
\end{align}
where the last equality of \eqref{equality from momentum eqn} follows from the boundary condition \eqref{Navier condition}. Upon integrating by parts and using the boundary condition that $\No\cdot \vu=0$ on $\partial\Omega$, we have
\begin{align*}
\Big|\intoxt \vu\cdot\nabla Pdxds\Big|&\le \Big(\intoxt|\nabla \vu|^2dxds\Big)^\frac{1}{2}\Big(\intoxt|P|^2dxds\Big)^\frac{1}{2}\\ 
&\le C\Big(\intoxt|\nabla \vu|^2dxds\Big)^\frac{1}{2}\Big(\intoxt|\rho|^2dxds\Big)^\frac{1}{2},
\end{align*}
and hence we obtain from \eqref{equality from momentum eqn} that
\begin{align}\label{L2 estimate on u}
&\intox\frac{\rho|\vu|^2}{2}dx+\intoxt\{\mu|\nabla \vu|^2+\lambda(\divv(\vu))^2\}dxds\notag\\
&\le C\Big(\intoxt|\nabla \vu|^2dxds\Big)^\frac{1}{2}\Big(\intoxt|\rho|^2dxds\Big)^\frac{1}{2}+\intox\frac{\rho_0|\vu_0|^2}{2}dx\notag\\
&\qquad-\intoxtb K^{-1}|\vu|^2 dS_xds.
\end{align}
On the other hand, we make use of the boundary condition \eqref{Navier condition} and the bound \eqref{boundedness on rho local-in-time solution} on $\rho$ to obtain
\begin{align}\label{L2 estimate on rho}
\intox\frac{|\rho|^2}{2}dx&=\intox\frac{|\rho_0|^2}{2}dx-\intoxt\rho\divv(\rho \vu)dxds\notag\\
&\le \intox|\rho_0|^2dx+\intoxt|\nabla\rho||\rho||\vu|dxds\notag\\
&\le \intox|\rho_0|^2dx+C\Big(\intoxt|\rho|^2dxds\Big)^\frac{1}{2}\Big(\intoxt|\vu|^2dxds\Big)^\frac{1}{2}.
\end{align}
We sum up \eqref{L2 estimate on u} and \eqref{L2 estimate on rho}, recall the positive lower bound \eqref{boundedness on rho local-in-time solution} on $\rho$ and choose $T^*\in(0,T]$ small enough, then for $t\in[0,T^*]$, we have
\begin{align}\label{L2 estimate on rho and u}
&\sup_{s\in[0,t]}\Big(\intox|\rho|^2dx+\intox|\vu|^2dx\Big)\notag\\
&\le\intox\frac{\rho_0|\vu_0|^2}{2}dx+\intox\frac{|\rho_0|^2}{2}dx-\intoxtb K^{-1}|\vu|^2 dS_xds.
\end{align}
We deduce from \eqref{L2 estimate on rho and u} that
\begin{align*}
\intoxtb |\vu|^2 dS_xds\le M_0K,\qquad\mbox{for all $t\in[0,T^*]$,}
\end{align*}
which implies \eqref{L2 bound on u on bdry}.
\end{proof}
We are now ready to give the proof of Theorem~\ref{main thm}:
\begin{proof}[Proof of Theorem~\ref{main thm}]
We sum up \eqref{estimate on phi} and \eqref{estimate on w}, apply the positive lower bound \eqref{boundedness on rho local-in-time solution} on $\rho$ and apply Young's inequality to obtain, for $t\in[0,T]$,
\begin{align}\label{L2 bound on phi and w}
&\intox\{\phi^2(x,t)+|\vw|^2(x,t)\}dx+\intoxt|\nabla \vw|^2dxds\notag\\
&\le C(\|\tu_t\|_{H^2}+1)\intoxt\{\phi^2+|\vw|^2\}dxds\notag\\
&\qquad+C\intoxtb|\vu|^2dS_xds+C\Big(\intoxtb|\vu|^2dS_xds\Big)^\frac{1}{2}.
\end{align}
Applying Gr\"{o}nwall's inequality on \eqref{L2 bound on phi and w} and using the bound \eqref{boundedness on u local-in-time solution} on the time integral $\int_0^T\|\tu_t(\cdot,t)\|_{H^2}^2dt$, we further get
\begin{align}\label{estimate on L2 norm of phi and w step 1}
&\intox\{\phi^2(x,t)+|\vw|^2(x,t)\}dx\notag\\
&\le C\Big\{\intoxtb|\vu|^2dS_xds+\Big(\intoxtb|\vu|^2dS_xds\Big)^\frac{1}{2}\Big\}e^{Ct},\,\,\,\mbox{for all $t\in[0,T]$.}
\end{align}
Let $T^*>0$ be chosen as in Lemma~\ref{estimate on u lemma}. We apply \eqref{L2 bound on u on bdry} on \eqref{estimate on L2 norm of phi and w step 1} to obtain, for all $t\in[0,T^*]$, 
\begin{align}\label{estimate on L2 norm of phi and w step 2}
\intox\{\phi^2(x,t)+|\vw|^2(x,t)\}dx\le C\Big\{M_0K+M_0^\frac{1}{2}K^\frac{1}{2}\Big\}e^{Ct}.
\end{align}
Hence by taking $K\to0$ in \eqref{estimate on L2 norm of phi and w step 2}, we have that
\begin{align}\label{convergence of phi and u in L2}
\left\{ \begin{array}{l}
\rho\to\trho\mbox{ in $L^\infty([0,T^*];L^2(\Omega))$ as $K\to0$,} \\
\vu\to\tu\mbox{ in $L^\infty([0,T^*];L^2(\Omega))$ as $K\to0$.}
\end{array}\right.
\end{align}
Moreover, by applying the convergences given in \eqref{convergence of phi and u in L2} on \eqref{estimate on L2 norm of phi and w step 2}, we have 
\begin{align*}
\vu\to\tu\mbox{ in $L^2([0,T^*];H^1(\Omega))$ as $K\to0$.}
\end{align*}
Since $\tu=0$ on $\partial\Omega$, using \eqref{L2 bound on u on bdry} from Lemma~\ref{estimate on u lemma}, we conclude that $\vu\to\tu$ in $L^2([0,T^*];L^2(\partial\Omega))$ as $K\to0$ and \eqref{convergence of u on the boundary} follows. Finally, by Sobolev inequality, for $s_1\in(0,2]$, there exists $\sigma=\sigma(s_1)\in(0,1)$ such that for $t\in[0,T^*]$,
\begin{align}\label{Hs estimate on u}
\|(\vu-\tu)(\cdot,t)\|_{H^{s_1}}\le\|(\vu-\tu)(\cdot,t)\|_{L^2}^\sigma\|(\vu-\tu)(\cdot,t)\|_{H^{3}}^{1-\sigma},
\end{align}
so that by applying the bound \eqref{boundedness on u local-in-time solution} on $\|(\vu-\tu)(\cdot,t)\|_{H^{3}}^{1-\sigma}$, we have 
\begin{align*}
\mbox{$\dis\sup_{0\le t\le T^*}\|(\vu-\tu)(\cdot,t)\|_{H^{s_1}}\to0$ as $K\to0$}
\end{align*}
and \eqref{convergence of u in Hs} follows. The proof of \eqref{convergence of rho in Hs} is just similar and we finish the proof of Theorem~\ref{main thm}.
\end{proof}

% ------------------------------------------------------------------------

%\subsection*{Acknowledgment}
%The author would like to thank the anonymous reviewers for their useful comments and language editing which have greatly improved the manuscript.

% ------------------------------------------------------------------------
\end{document}